\definecolor{mygreen}{rgb}{0.1,0.75,0.2}
\providecommand{\bbs}[1]{\left(#1\right)}
 \newtheorem{thm}{Theorem}[section]
 \newtheorem{cor}[thm]{Corollary}
 \newtheorem{lem}[thm]{Lemma}
 \newtheorem{rem}[thm]{Remark}
 \numberwithin{equation}{section}
\DeclareMathOperator{\KL}{KL}
\providecommand{\bbs}[1]{\left(#1\right)}
\newcommand{\lra}{\longrightarrow}
\newcommand{\wra}{\rightharpoonup}
\newcommand{\la}{\langle}
\newcommand{\ra}{\rangle}
\newcommand{\pt}{\partial}
\newcommand{\eps}{\varepsilon}
\newcommand{\ud}{\,\mathrm{d}}
\newcommand{\8}{\infty}
\newcommand{\bR}{\mathbb{R}}
\newcommand{\bZ}{\mathbb{Z}}
\newcommand{\bE}{\mathbb{E}}
\newcommand{\sP}{\mathcal{P}}
\newcommand{\sL}{\mathcal{L}}
\newcommand{\rhoe}{\rho_t^\eps}
\newcommand{\pie}{\pi_\eps}
\newcommand{\be}{B_\eps^{-1}}
\newcommand{\lbar}{\overline}
\begin{document}

\title[Homogenization of Wasserstein gradient flows]{Homogenization of 
Wasserstein gradient flows}

\author[Y. Gao]{Yuan Gao}
\address{Department of Mathematics, Purdue University, West Lafayette, 47907}
\email{gao662@purdue.edu}

\author[N. K. Yip]{Nung Kwan Yip}
\address{Department of Mathematics, Purdue University, West Lafayette, 47907}
\email{yipn@purdue.edu (corresponding author)}

\begin{abstract}
We prove the convergence of a Wasserstein gradient flow of a free energy in 
  inhomogeneous media. Both the energy and media can depend on the spatial
variable in a fast oscillatory manner. In particular, we show that the 
gradient-flow structure is preserved in the limit which is expressed in terms 
of an effective energy and Wasserstein metric. 
The gradient flow and its limiting behavior are analyzed through an energy 
dissipation inequality (EDI).
The result is consistent with asymptotic analysis in the realm of
homogenization. However, we note that the effective metric is in general
different from that obtained from the Gromov-Hausdorff convergence of metric
spaces. We apply our framework to a linear Fokker-Planck equation but we
believe the approach is robust enough to be applicable in a broader context.
\end{abstract}

\date{\today}

\maketitle

 \section{Introduction}
 
%\textbf{Motivation.} 
Optimal transport has appeared in many practical and theoretical
applications, cf. 
\cite{rachev1998mass1, rachev1998mass2, villani2003topics, villani2009optimal, peyre2019computational}. 
Precisely, given a cost function 
$c(\cdot,\cdot): \bR^n\times\bR^n\longrightarrow\bR$, 
and two probability measures $\mu,\nu$ on $\bR^n$, 
the problem of 
optimal transport is to find the minimum cost of transporting
$\mu$ to $\nu$. It has the following two classical
formulations: first by Monge \cite{monge1781memoire} in terms of optimal transport
map, and a second formulation using duality by Kantorovich \cite{kantorovich1942translocation} in terms of
optimal coupling measure:
\begin{equation}
\tag{\text{Monge}}
  \inf\left\{
\int c(x,\Phi(x))\ud\mu(x):\,\,\,\Phi: \bR^n\longrightarrow\bR^n,\,\,\,\Phi_\sharp\mu = \nu
\right\},
\end{equation}
and 
\begin{equation}
\tag{\text{Kantorovich}}
 \inf\left\{
\iint   c(x,y) \ud \gamma(x,y); \quad \int \gamma(x,\ud y)  = \mu(x), \,\, \int \gamma(\ud x,y) = \nu(y)
\right\}.
\end{equation}
In the above, $\gamma$ is a probability measure on the product space 
$\bR^n\times\bR^n$.
The equivalence of the above, under appropriate general assumptions, 
has been established in \cite{pratelli2007equality}. 
Typical examples of cost functions include the Euclidean distance square,
$c(x,y)=|x-y|^2$ which is convex and spatially homogeneous in the sense that
$c(x,y)=c(x-y)$. In this case, the infimum value of the above two formulations
 is    the square of Wasserstein-2 distance between $\mu$ and $\nu$, denoted as $W_2^2(\mu,\nu)$. We refer to \cite{villani2003topics, villani2009optimal, book0, santambrogio2015optimal} for examples
of monographs on the theory of optimal transports.

The main purpose of the current paper is to incorporate spatial inhomogeneity
into the above problem, or more precisely, the cost function $c$. 
We then consider gradient flows with respect to the Wasserstein metric induced 
by $c$ and analyze their limiting behavior or 
description when the inhomogeneity converges in appropriate sense. 
We believe these types of questions appear naturally in many 
applications such as 
urban transportations \cite{bernot2008optimal, buttazzo2008optimal}, 
network science \cite{kivela2014multilayer},
spread of epidemics \cite{balcan2009multiscale}, 
optics \cite{rubinstein2017geometrical},
and many others.
Such a consideration 
indeed has a long history in the realm of homogenization 
\cite{bensoussan2011asymptotic, sanchez1980non}.
On a technical level, we aim to explore how the ideas of homogenization can 
be introduced into optimal transport problems. 
Even though in the current paper
we work in a spatially continuous setting, the problem formulation can 
be posed in a discrete, graph or network setting, as seen from the above
mentioned applications. See also the end of this section for some
mathematical work on these attempts.

To be specific, we consider cost functions $c_\eps(\cdot, \cdot)$ that depend 
on the spatial variables in some oscillatory manner. We find that the 
formulation of Benamou-Brenier \cite{benamou2000computational} is well-suited for this purpose. Not only does
it connect  optimal transport to some underlying ``dynamical process'', 
it allows us to incorporate spatial inhomogeneity ``more or less at will''.
More precisely, we focus on the case that $c_\eps(x,y)$ is defined through a 
{\em least action principle},
\begin{equation}
c_\eps(x,y) = \min \left\{ \int_0^1 L_\eps(\dot{z}_t, z_t) \ud t, 
\quad z: [0,1]\longrightarrow\bR^n,\,\,z_0=x, \,\, z_1=y \right\},
\end{equation}
where we envision that $L_\eps$ is convex in the first variable $ v=\dot{z}_t$ 
and oscillatory or periodic in the second variable $z_t$. 
Note that this cost function also defines a metric in an
inhomogeneous media with periodic structure.
If one further assumes that $L$ is a bilinear form in $v$, given by a 
positive definite matrix $B_\eps(x)$,
\begin{equation}\label{L.bilin}
L(v,z) = \la B_\eps(z)v, \, v\ra,
\end{equation}
then $c_\eps(x,y)$ defines a Riemannian metric on $\bR^n$ 
\begin{equation}\label{eps-c}
c_\eps^2(x,y) = \min\left\{ \int_0^1 \la B_\eps(z_t) \dot{z}_t, \dot{z}_t \ra \ud t, \quad z: [0,1]\longrightarrow\bR^n,\,\, z_0 = x, \,\, z_1=y \right\}.
\end{equation}
The above leads to the following $\eps$-Wasserstein distance (square)
between $\mu,\nu\in \sP(\bR^d)$,
\begin{equation}\label{Wc}
W_\eps^2(\mu,\nu):= \inf\left\{
\iint  c_\eps(x,y) \ud \gamma(x,y); \quad \int \gamma(x,\ud y)  = \mu(x), \,\, \int \gamma(\ud x,y) = \nu(y)
\right\}.
\end{equation} 
The description and formulation in this and next sections is applicable for 
general spatially inhomogeneous $B_\eps$ but the focus of this paper is when $B_\eps$ takes the form $\displaystyle B_\eps(x)=B(\frac{x}{\eps})$ -- see Section \ref{main} for precise statements and assumptions.

In order to keep the technicality in this paper manageable, 
we will only consider probability measures having densities with respect to the Lebesgue measure. Henceforth, for simplicity, we will use $\sP_2(\bR^n)$ to denote these 
measures or their densities. The subscript $2$ means these measures have finite second moments. More precise assumptions will be stated in Section \ref{main}. Now let $(\sP_2(\bR^n), W_\eps)$ be the Polish space endowed with the 
$\eps$-Wasserstein metric. The main questions we want to understand are:
\textit{whether gradient-flow structures in $(\sP_2(\bR^n), W_\eps)$ 
are preserved as $\eps \to 0$ and if so,
what the limiting Wasserstein distance $\lbar{W}$ and 
gradient flow are.} We have given positive results for the case of 
linear Fokker-Planck equations in periodic media.

With \eqref{eps-c}, the $\eps$-Wasserstein distance $W_\eps$ can be expressed 
using the following spatially inhomogeneous Benamou-Brenier formulation,
\begin{equation}\label{WBB}
W_\eps^2(\rho_0, \rho_1) := \inf\left\{
\int_0^1\int \rho_t(x)\langle B_\eps(x) v_t(x), v_t(x)\rangle \,dx\,dt,\quad
(\rho_t, v_t)\in V(\rho_0, \rho_1)
\right\}
\end{equation}
where 
\begin{equation}\label{ContEqn}
V(\rho_0, \rho_1) := \Big\{(\rho_t, v_t):\,\,
\frac{\partial \rho_t}{\partial t} + \nabla\cdot(\rho_t v_t) = 0,
\quad
\rho(\cdot, 0) = \rho_0,\quad
\rho(\cdot, 1) = \rho_1
\Big\}.
\end{equation}
The work \cite{bernard2007optimal} -- see its Theorems A and B -- 
in fact shows that the $\inf$ of \eqref{WBB} (and \eqref{Wc}) is achieved by a unique 
interpolation between $\rho_0$ and $\rho_1$, given by a flow map $\frac{\ud}{\ud t}\Phi^\eps_t=v_t(\Phi^\eps_t)$,
\begin{equation}
\rho_t = (\Phi^\eps_t)_{\sharp}\rho_0,
\quad 0\leq t \leq 1.
\end{equation}
Note that for the case $\eps =1, B_\eps = I$, \eqref{WBB} is the celebrated Benamou-Brenier formula \cite{benamou2000computational} for the standard (squared) Wasserstein distance
 \begin{equation}\label{W2}
 W_2^2(\rho_0,\rho_1)=\inf\left\{
\iint  |x-y|^2 \ud \gamma(x,y); \quad \int \gamma(x,\ud y)  = \rho_0(x)\ud x, \,\, \int \gamma(\ud x,y) = \rho_1(y)\ud y
\right\}.   
 \end{equation}
The functional in \eqref{WBB} defines an action functional   on $(\sP_2(\bR^n), W_2)$, 
%through the metric derivative $|\dot{\rho}_t|_{W_2}$ for curve $\rho_t(\cdot)\in (\sP_2(\bR^n), W_2)$ satisfying
%$$\frac12|\dot{\rho}_t|_{W_2}^2 = \int \rho_t(x)\langle  \nabla \varphi_t(x), \nabla \varphi_t(x)\rangle \,\ud x, \quad \text{ where } \pt_t{\rho}_t +\nabla\cdot (\rho_t \nabla \varphi_t)=0.$$
which allows one to directly use least action principles on $(\sP_2(\bR^n), W_2)$ to compute the $W_2$-distance.
In the seminal paper \cite{otto2001geometry}, \textsc{Otto} went further to 
regard 
$W_2$ as a   Pseudo-Riemannian  distance on $\sP_2(\bR^n)$ with the Riemannian metric 
being the same as the one given by the Benamou-Brenier formula. 
More precisely, 
for any  $s_1, s_2$ on the tangent plane $T_\sP$ at $\rho\in \sP$, the metric tensor on $T_\sP\times T_\sP$ is given by
\begin{equation} 
\big\la s_1, s_2\big\ra_{T_\sP, T_\sP}:= \int  \rho(x)\langle   \nabla \varphi_1(x), \nabla \varphi_2(x)\rangle \,dx, \quad \text{ where } 
s_i = -\nabla \cdot (\rho \nabla \varphi_i), \,\,\,i=1,2.
\end{equation}
(See Section \ref{sec:FP-GF} for an explanation of going from 
$v_t$ in \eqref{WBB} to $\nabla\varphi$ above.)
 
With the above set-up for the Wasserstein distance, we proceed to consider
gradient flows in $(\sP_2(\bR^n), W_\eps)$ of a given energy functional
$E_\eps: \sP_2(\bR^n) \longrightarrow\bR$,
\begin{equation}\label{eGF1st}
\pt_t \rho^\eps_t = - \nabla^{W_\eps} E_\eps (\rho^\eps_t).
\end{equation}
The precise dynamics is uniquely determined by a dissipation functional 
on the tangent plane characterizing the rate of change of the energy
from which the Wasserstein gradient $\nabla^{W_\eps}$ is derived.
In this paper, we consider energy dissipation expressed by the metric
$W_\eps$ (induced by \eqref{WBB}). It turns out $W_\eps$ can be formally interpreted as
a Riemannian metric (see \eqref{epsRe}),
which in particular is given by a bilinear form.
Based on the expression of $\nabla^{W_\eps}$ (see \eqref{nablaW}), $\eps$-Wasserstein gradient flow \eqref{eGF1st} can be explicitly written as
\begin{equation}\label{eFP1st}
\pt_t \rho^\eps_t 
=\nabla \cdot \bbs{\rho^\eps_t B_\eps^{-1} 
\nabla\frac{\delta E_\eps}{\delta \rho}(\rho^\eps_t)}.
\end{equation}
Note that our formulation allows oscillations in 
both the energy $E_\eps$ and media $B_\eps$.

If the total energy is taken as the relative entropy or the
Kullback–Leibler divergence between $\rho$ and another probability
distribution $\pie \in \sP_2(\bR^n)$,
\begin{equation}\label{Eeps}
E_\eps(\rho) 
= \KL(\rho||\pi_\eps)
:= \int_{\bR^n} \rho(x) \log \frac{\rho(x)}{\pi_\eps(x)} \ud x,
\end{equation}
then the above $\eps$-Wasserstein gradient flow \eqref{eFP1st} is the same as
a linear Fokker-Planck equation with oscillatory coefficients.  
The above energy is often called the free energy of the system
and $\pie$ in \eqref{Eeps} 
is a stationary distribution corresponding to an underlying
stochastic process.

Our main result is the evolutionary convergence of the $\eps$-Wasserstein 
gradient flow \eqref{eFP1st} as $\eps \to 0$, to a limit also characterized as 
a gradient flow of an effective total energy $\overline{E}$ with respect to 
an effective Wasserstein distance $\overline{W}$. The distance $\overline{W}$ 
induced by the evolutionary convergence is still a Riemannian metric on 
$\sP_2(\bR^n)$. However, we find that it is in general different from the 
direct Gromov-Hausdorff limit of $W_\eps$.  
Even though our main result is proven for continuous state spaces, the approach we used for proving the convergence of multi-scale gradient flows can also be applied to discrete state spaces, in particular, graphs with inhomogeneous structure.

The main approach we use is to first recast 
the $\eps$-Wasserstein gradient flow \eqref{eFP1st} as a  generalized gradient 
flow  in the following form of an energy dissipation inequality (EDI)
\begin{equation}\label{EDI1st} 
E_\eps(\rhoe) + \int_0^t \left[ \psi_\eps(\rho^\eps_\tau, \pt_\tau \rho^\eps_\tau ) +  \psi^*_\eps\left(\rho_\tau^\eps, -\frac{\delta E_\eps}{\delta \rho}(\rho_\tau^\eps)\right) \right] \ud \tau  \leq  E_\eps(\rho_0^\eps).
\end{equation}
This formulation involves dissipation functionals $\psi_\eps$ and 
$\psi^*_\eps$ on the tangent and the co-tangent plane of $\sP_2(\bR^n)$, 
respectively. 
Inequality \eqref{EDI1st} is in fact equivalent to the strong form of gradient flow \eqref{eGF1st} since the functional $\psi_\eps$ and $\psi^*_\eps$ are convex conjugate of each other; for details, see Section \ref{sec2.2}.
Then the limiting behavior of the dynamics is obtained by considering  
the limit of the functionals in \eqref{EDI1st}.

The framework using the EDI formulation of gradient flows to obtain the evolutionary $\Gamma$-convergence of  gradient flows was first established by \textsc{Sandier and Serfaty}
\cite{sandier2004gamma, serfaty2011gamma}. In this setting,
the key estimates are the lower bounds of the free energy and the 
energy dissipations in terms of the metric velocity and the metric slope.  Many generalizations of the evolutionary convergence for generalized gradient flow systems are     developed by \textsc{Mielke, Peletier} and collaborators; 
see the concept of energy-dissipation-principle (EDP) convergence of gradient flows in \cite{arnrich2012passing, liero2017microscopic}, the concept of generalized tilt/contact EDP convergence developed in \cite{dondl2019gradient, mielke2021exploring}, and also the review \cite{mielke2016evolutionary}.
 
Following the above general framework for evolutionary $\Gamma$-convergence of 
gradient flows, we pass the limit in $\eps$-EDI \eqref{EDI1st} 
by proving the lower bounds of 
all three functionals on the left-hand-side of \eqref{EDI1st}:
the energy functional $E_\eps$, the time integrals of dissipation functionals 
$\psi_\eps$ and $\psi^*_\eps$. 
The lower bounds of the latter two, denoted as $\psi$ and $\psi^*$, are still 
functionals in bilinear form and are convex conjugate of each other and thus determines the limiting Wasserstein gradient flow with an effective Wasserstein distance $\overline{W}$; see the precise definition of these lower bounds in   Theorem \ref{thm1}. The lower bound for $\psi^*_\eps$ is obtained by using a Fisher 
information reformulation in terms of 
$\displaystyle \sqrt{\frac{\rho^\eps}{\pi_\eps}}$
\cite{arnrich2012passing, AGS} and a by now classical 
$\Gamma$-convergence technique for an associated Dirichlet energy. 
On the other hand, the lower bound for $\psi_\eps$ is obtained by a relaxation via the Legendre transformation and an upper bound estimate for $\psi^*_\eps$. 
This requires one to overcome some regularity issues brought by the 
oscillations in the energy functional $E_\eps$ and the solution curve 
$\rho^\eps$. This is achieved via a symmetric reformulation of the Fokker-Planck 
equation in terms of the variable 
$\displaystyle f^\eps : =\frac{\rho^\eps}{\pie}.$

We briefly mention some related references on Wasserstein gradient flow with multi-scale behaviors.  Modeling of Fokker-Planck equation as a gradient flow in Wasserstein 
space was first noted by \textsc{Jordan-Kinderlehrer-Otto} \cite{jordan1998variational}. They also show the convergence of a 
variational backward Euler scheme.
There are many other evolutionary problems that can be formulated using multi-scale Wasserstein gradient flows; see for instance the porous medium equation \cite{otto2001geometry} and more general   aggregation-diffusion equations reviewed in \cite{carrillo2019aggregation}. In \cite{arnrich2012passing}, they use the evolutionary convergence of Wasserstein gradient flow to 
analyze the mean field equation in a zero noise limit for a reversible drift-diffusion process. 
There are also extensions for the zero noise limit from diffusion processes to 
chemical reactions described by time-changed Poisson processes on countable 
states; 
see \cite{maas2020modeling} for the reversible case using  a discrete Wasserstein 
gradient-flow approach and \cite{GL22t} for the irreversible case 
using a nonlinear semigroup approach for Hamilton-Jacobi equations.  
Homogenization of action functionals on the space of probability 
measures has also been studied in \cite{gangbo2012homogenization}.
In addition, convergence of Wasserstein gradient flows has been applied to 
related questions which explore the mean-field limit and large deviation principle of weakly interacting 
particles; cf. \cite{dupuis2012large, budhiraja2012large} and some recent developments in \cite{carrillo2020lambda, delgadino2021diffusive}. 
%Delgadinao-Pavliotis-Carrilos:
Furthermore, a similar convergence approach has also been used for generalized gradient
flows and optimal transport  on graphs and their diffusive limits. 
In various discrete settings, we refer to 
\cite{gigli2013gromov} for Gromov-Hausdorff convergence of discrete Wasserstein metrics,  \cite{forkert2022evolutionary} for evolutionary $\Gamma$-convergence of finite volume scheme for linear Fokker-Planck equation, \cite{gladbach2020homogenisation, gladbach2023homogenisation} for the homogenization of Wasserstein distance on periodic graphs,  and the recent works \cite{schlichting2022scharfetter, hraivoronska2023diffusive, hraivoronska2023variational} for diffusive limits of some generalized gradient flows on graph.

 The remainder of this paper is outlined as follows. 
In Section \ref{sec2}, we introduce the inhomogeneous Fokker-Planck and the $\eps$-Wasserstein gradient flow in EDI form and describe   our assumptions and main results. In Section \ref{sec3}, we obtain some uniform regularity estimates and convergence results for the $\eps$-Wasserstein gradient flow. In Section \ref{epsEDI-0EDI}, we  pass the limit in the EDI form of the $\eps$-Wasserstein gradient flow by proving lower bounds for the free energy and two dissipation functionals; see Theorem \ref{thm1}. In Section \ref{sec5}, we study the limiting gradient flow with respect to the induced limiting Wasserstein metric and compare it with the usual Gromov-Hausdorff convergence of $W_\eps.$

\section{$\eps$-system: inhomogeneous Fokker-Planck and generalized gradient flow}\label{sec2}
In this section, we introduce  a spatially inhomogeneous Fokker-Planck equation, which, with fixed $\eps>0$, can be recast as a generalized gradient flow in $\eps$-Wasserstein space in terms of a total energy given by a relative entropy. This Fokker-Planck equation is motivated by a drift-diffusion process with inhomogeneous noise and drift that satisfy the fluctuation-dissipation relation. 
In Section \ref{sec2.2}, we choose a pair of 
quadratic dissipation functionals $(\psi_\eps, \psi^*_\eps)$
which are convex conjugate to each other to recast the
$\eps$-Fokker-Planck equation as a generalized gradient flow in 
an EDI form. Then in Section \ref{main}, we state and explain our main results on the convergence of the gradient-flow structure as $\eps \to 0$ and the resulting homogenized gradient flow of an effective free energy $\overline{E}$ with respect to an effective Wasserstein metric $\overline{W}$.

From now on, to avoid boundary effects, we work on periodic domain, denoted as $\Omega:=\mathbb{T}^n.$   Given any smooth potential function $U_\eps: \Omega\longrightarrow\bR$,
consider the following (free) energy functional on $\sP(\Omega)$
\begin{equation}\label{freeEeps}
E_\eps(\rho) = \int_\Omega U_\eps(x) \rho(x) \ud x + \int_\Omega \rho(x) \log \rho(x) \ud x.
\end{equation}
Let
\begin{equation}\label{pi}
\pi_\eps(x) = e^{-U_\eps(x)}.
\end{equation}
Then \eqref{freeEeps} can be written in the form \eqref{Eeps}.
The first variation
 $\displaystyle \frac{\delta E_\eps}{\delta \rho}$ of $E_\eps$ is then given by,
\begin{equation}
\frac{\delta E_\eps}{\delta \rho}(\rho) = \log \rho + 1 + U_\eps = \log \frac{\rho}{\pi_\eps} +1. 
\end{equation}
With a positive definite matrix $B_\eps$,
we consider  the following inhomogeneous Fokker-Planck equation 
\begin{equation}\label{epsFP}
\pt_t \rho^\eps_t = \nabla \cdot \bbs{ \rho^\eps_t B_\eps^{-1} \nabla \frac{\delta E_\eps}{\delta \rho}(\rho^\eps_t) } = \nabla \cdot \bbs{\be \nabla \rhoe + \rhoe \be \nabla U_\eps}.
\end{equation}
The above equation can be interpreted in two ways. 
One is to regard it as the Kolmogorov forward equation of a 
drift-diffusion process with a multiplicative noise, while another as
a gradient flow in a Wasserstein space $(\sP(\Omega), W_\eps)$ with the cost function defined in \eqref{eps-c}.
We describe both of these in the following.

\subsection{$\eps$-Fokker-Planck equation \eqref{epsFP} as a Kolmogorov
equation}
Consider a drift-diffusion process $(X_t)_{t\geq 0}$, described by  
the following stochastic differential equation 
\begin{equation}\label{sde}
\ud X_t = b(X_t) \ud t + \sigma(X_t) * \ud B_t,
\end{equation}
where $B_t$ is a one-dimensional Brownian motion, and
\begin{equation}
b(x) = - \be(x) \nabla U_\eps(x), \quad \text{and}\quad
\sigma(x) = \sqrt{2 \be(x)}.
\end{equation}
Here  the multiplicative noise $\sigma(X_t) * \ud B_t$ is in the backward Ito 
differential sense, which is equivalent to the forward Ito differential
by adding an additional drift term
$$
\sigma(X_t)* \ud B_t = \frac12\nabla \cdot (\sigma \sigma^T)(X_t) \ud t + 
\sigma(X_t) \ud B_t.
$$
By Ito's formula, the generator of the process $(X_t)_{t\geq 0}$ is 
derived as follows. For any test function $\varphi\in C^2_b(\bR^n)$ and
initial condition $X_0=x$, we compute
\begin{equation}
\begin{aligned}
\lim_{t\to 0^+} \frac{\bE^x[\varphi(X_t)]-\varphi(x)}{t} 
= &\lim_{t\to 0^+} \bE^x \frac{1}{t}\int_0^t \big[ \nabla \varphi(X_s) \cdot b(X_s) \\
&+ \frac12  \nabla^2 \varphi(X_s) : (\sigma\sigma^T)(X_s) + \frac12 (\nabla \cdot (\sigma\sigma^T)(X_s)) \cdot \nabla \varphi (X_s)  \big] \ud s\\
=& \nabla \varphi(x) \cdot b(x) + \frac12 \nabla \cdot (\sigma\sigma^T \nabla \varphi(x))=: \mathcal{L}\varphi.
\end{aligned}
\end{equation}
Thus the corresponding Fokker-Planck equation to \eqref{sde} is given by
\begin{eqnarray}
\pt_t \rhoe &=& \mathcal{L}^*\rho^\eps_t \nonumber\\
&:=& \frac12 \nabla \cdot \bbs{\sigma \sigma^T \nabla \rhoe}  -\nabla \cdot \bbs{\rhoe b}\nonumber\\
& = & \nabla \cdot \bbs{\be(x) \nabla \rhoe(x)} + \nabla\cdot (\rhoe(x) \be(x) \nabla U_\eps(x) ),
\label{Kol.eqn}
\end{eqnarray}
which is exactly \eqref{epsFP}. Note that the $\pie$ defined in \eqref{pi}, 
which is in the form of a Gibbs measure, is in fact the unique stationary 
distribution of \eqref{Kol.eqn}, $\mathcal{L}^*\pie = 0$.

We remark that in the above drift-diffusion process, we used the Ito backward 
differential to ensure that our process $(X_t)_{t\geq 0}$  with a  multiplicative 
noise is reversible so that one can have a gradient flow structure for the corresponding Fokker-Planck equation. More precisely, we have that the
diffusion  process $(X_t)_{t\geq 0}$ \eqref{sde} starting from $X_0 \sim \pi_\eps$ is reversible in the sense that the time reversed process  has the same distribution, i.e.
\begin{equation}
\bE(\varphi_1(X_t)\varphi_2(X_0)|X_0\sim \pie) = \bE(\varphi_1(X_0)\varphi_2(X_t)|X_0\sim \pie), \quad \forall \varphi_1,\varphi_2\in C_0^\8(\bR^n),\, \forall t>0.
\end{equation}
This condition is equivalent to the symmetry of the generator $\sL$ in $L^2(\pi_\eps)$; cf. \cite{GLL_reversible}.

\subsection{$\eps$-Fokker-Planck equation \eqref{epsFP} as a gradient flow in 
$(\sP(\Omega), W_\eps)$}\label{sec:FP-GF}
Following Otto's formal Riemannian calculus on Wasserstein space 
\cite{otto2001geometry}, we now interpret the Fokker-Planck equation as a (negative) gradient flow in $(\sP(\Omega), W_\eps)$. For this purpose, we need to compute the Wasserstein gradient 
$\nabla^{W_\eps} E_\eps$ of $E_\eps$ in $(\sP(\Omega), W_\eps)$.
 
Given any absolutely continuous curve $\tilde{\rho}_t$ in 
$(\sP(\Omega), W_\eps)$ given by $\tilde{\rho}_t := (\chi_t)_\# \rho$
with $\tilde{\rho}_{t=0}=\rho$,
where $\chi_t$ is the flow map induced by a smooth velocity field
$v_t$. 
Then $\tilde{\rho}_t$ satisfies the continuity equation
$$\pt_t \tilde{\rho}_t + \nabla \cdot \bbs{\tilde{\rho}_t v_t  }=0.$$
With this, we compute the first variation of $E_\eps$ 
\begin{equation}\label{otto1}
\frac{\ud}{\ud t}\Big|_{t=0} E_\eps(\tilde{\rho}_t) 
= \int_\Omega \frac{\delta E_\eps}{\delta \rho} \pt_t \tilde{\rho}_t\big|_{t=0} \ud x 
= \int_\Omega \frac{\delta E_\eps}{\delta \rho} \left(
-\nabla\cdot(\tilde{\rho}_tv_t)\big|_{t=0}
\right)\ud x 
= \int_\Omega \left\la  \nabla \frac{\delta E_\eps}{\delta \rho}, v_0 \right\ra \rho \ud x.
\end{equation} 
We will use the above to identify the gradient $\nabla^{W_\eps} E_\eps$ of 
$E_\eps$ with respect to a Riemannian metric 
$\la \cdot, \cdot \ra_{T_{\sP}, T_{\sP}}$ on the tangent plane 
$T_{\sP}$ of $(\sP(\Omega), W_\eps)$. 

Based on \eqref{WBB}, we have that
for any $\rho\in \sP(\Omega)$ and $s_1, s_2\in T_{\sP}$ at $\rho$, the metric
is given by
\begin{equation}\label{epsRe}
\big\la s_1, s_2\big\ra_{T_\sP, T_\sP}:= \int  \rho(x)\left\langle B^{-1}_\eps(x) \nabla \varphi_1(x), \nabla \varphi_2(x)\right\rangle \,dx, \quad \text{ where } 
s_i = -\nabla \cdot (\rho B^{-1}_\eps\nabla \varphi_i),\,\,\,i=1,2.
\end{equation}
A word is in place here to explain going from $v_t$ in 
\eqref{WBB} to $\nabla\varphi$ above. At a fixed $t$ and $\rho_t$, upon minimizing 
$\displaystyle \int_\Omega\rho_t\la B_\eps(x) v_t, v_t\ra\,\ud x$
over $v_t$ subject to 
$\displaystyle 
- \nabla\cdot(\rho_t v_t)
= s \left(:=\frac{\partial \rho_t}{\partial t}\right)
$, we have that
\[
\int_\Omega\rho_t\la B_\eps(x) v_t, \xi\ra\,\ud x = 0
\,\,\,\text{for all smooth vector field $\xi$ satisfying
$- \nabla\cdot(\rho_t \xi) = 0$}.
\]
Hence $B_\eps v_t$ is orthogonal to all divergence free vector field
of the form $\rho_t\xi$. We then conclude that $B_\eps v_t$ must be the
gradient of some (potential) function $\varphi$. Thus $v_t$ can be represented 
as $v_t = B^{-1}_\eps\nabla\varphi$.

With the above, we express the first variation of $E_\eps$ 
using $\nabla^{W_\eps}E_\eps$ as follows, 
\begin{equation}\label{otto2}
\frac{\ud}{\ud t}\Big|_{t=0} E_\eps(\tilde{\rho}_t) 
= \Big\la \nabla^{W_\eps} E_\eps, \pt_t \tilde{\rho}_t\big|_{t=0} 
\Big\ra_{T_{\sP}, T_{\sP}} 
=  \int_\Omega \rho\la B^{-1}_\eps  \nabla \tilde{\varphi} , \nabla \varphi_0 \ra \ud x, 
\end{equation}
where 
\begin{equation}\label{grad.identify}
\pt_t\tilde{\rho}_t\big|_{t=0} = -\nabla \cdot  \bbs{ \rho B^{-1}_\eps\nabla \varphi_0 }
\quad\text{and}\quad
\nabla^{W_\eps} E_\eps (\rho)= -\nabla \cdot\bbs{\rho B^{-1}_\eps\nabla \tilde{\varphi} }.
\end{equation}
Comparing  \eqref{otto1} with \eqref{otto2}, we have 
\[
\int_\Omega \left\la  \nabla \frac{\delta E_\eps}{\delta \rho}, v_0 \right\ra \rho \ud x
=
\int_\Omega \rho\big\la B^{-1}_\eps  \nabla \tilde{\varphi} , \nabla \varphi_0 
\big\ra \ud x
\]
which is set to hold for any $v_0=B^{-1}_\eps\nabla\varphi_0$.
Hence $\displaystyle \nabla \tilde{\varphi} = \nabla \frac{\delta E_\eps}{\delta \rho}$. Thus the second part of \eqref{grad.identify} leads to the
following identification of $\nabla^{W_\eps} E (\rho)$,
\begin{equation}\label{nablaW}
\nabla^{W_\eps} E_\eps (\rho) := -\nabla \cdot \bbs{\rho B_\eps^{-1} \nabla\frac{\delta E_\eps}{\delta \rho}}=-\nabla \cdot \bbs{\rho B_\eps^{-1} \nabla\log\frac{\rho}{\pie}}.
\end{equation}
Hence the inhomogeneous Fokker-Planck equation \eqref{epsFP} indeed can be 
written as a gradient flow of $E_\eps$ with respect to the $\eps$-Wasserstein metric $W_\eps$, i.e.,
\begin{equation}\label{epsGF}
\pt_t \rho^\eps_t = -\nabla^{W_\eps} E_\eps (\rho^\eps_t)
= \nabla \cdot \bbs{\rho^\eps B_\eps^{-1} \nabla\log\frac{\rho^\eps}{\pi_\eps}}.
\end{equation}

We remark that in general an equation may have many different gradient flow 
structures with respect to the same free energy $E_\eps$, cf. 
\cite{mielke2021exploring}. However, in this paper, we restrict ourselves within   the framework of Wasserstein gradient flows as it fits naturally   to the evolution in probability space.

\subsection{$\eps$-generalized gradient flow in energy-dissipation inequality (EDI) form}\label{sec2.2}

As mentioned previously, in order to study the limiting gradient flow 
structure as the small parameter 
$\eps\to 0$ in our $\eps$-gradient flow \eqref{epsGF}, we will recast it in an 
energy-dissipation inequality (EDI) form \eqref{EDI1st} that is shown to be equivalent to the
original  $\eps$-gradient flow system.

Denote the $\eps$-dissipation on the tangent plane $T_{\sP}$ as a functional $\psi_\eps: \sP \times T_{\sP} \to \bR$ defined by
\begin{equation}\label{psi}
\psi_\eps(\rho, s):= \frac12 \int_\Omega \la \nabla u, B_\eps^{-1} \nabla u \ra \rho \ud x, \quad \text{ with }\, s=-\nabla \cdot \bbs{\rho B_\eps^{-1} \nabla u},
\end{equation}
and the $\eps$-dissipation on the cotangent plane $T^*_{\sP}$ as a functional $\psi^*_\eps: \sP \times T^*_{\sP} \to \bR$ defined by
\begin{equation}\label{c_psi}
\psi^*_\eps(\rho, \xi):= \frac12 \int_\Omega \la \nabla \xi, B_\eps^{-1} \nabla \xi \ra \rho \ud x.
\end{equation}
It is easy to check that 
\begin{equation}
\begin{aligned}
\psi_\eps(\rho,s) =& \sup_{\xi\in T^*_\rho} \Big\{ \la \xi, s \ra_{T^*_\rho, T_\rho} - \psi^*_\eps(\rho, \xi)  \Big\}\\
=& \la \xi^*, s \ra_{T^*_\rho, T_\rho} - \psi^*_\eps(\rho, \xi^*) \quad \text{ with } s= -\nabla \cdot \bbs{\rho B_\eps^{-1}\nabla \xi^* }\\
=&  \frac12 \int_\Omega \la \nabla \xi^*, B_\eps^{-1} \nabla \xi^* \ra \rho \ud x.
\end{aligned}
\end{equation}

Applying the Fenchel-Young inequality to the convex functionals 
$\psi_\eps$ and $\psi^*_\eps$, we have
\begin{equation}\label{FenchelYoung}
\la\xi, s\ra \leq \psi_\eps^*(\rho, \xi) + \psi_\eps(\rho, s),
\quad\text{for all}\quad \xi\in T_\rho^*,\,\,\,\text{and}\,\,\,s\in T_\rho,
\end{equation}
with equality holds if and only if
$\xi\in\partial_s\psi_\eps(\rho,s)$ and 
$s\in\partial_\xi\psi^*_\eps(\rho,\xi)$.
Here 
$\partial_s\psi_\eps(\rho, s)$ 
and $\partial_\xi\psi_\eps^*(\rho, \xi)$ refer
to the sub-differentials of $\psi_\eps$ and $\psi_\eps^*$ on
$T_\rho$ and $T_\rho^*$, respectively, at a fixed $\rho$.
We also note the following.
\begin{enumerate}
\item For all $\eta\in T^*_{\sP}$, we have
\begin{eqnarray*}
&&\Big\langle\partial_\xi\psi^*_\eps(\rho, \xi),\eta\Big\rangle
=
\lim_{\tau\to0}\left.
\frac{d}{d\tau}\psi^*_\eps(\rho, \xi+\tau\eta)\right|_{\tau=0}\\
&=& 
\int\langle\nabla\xi, B_\eps^{-1}\nabla\eta\rangle\rho\,\ud x
=\int-\eta\nabla\cdot(\rho B_\eps^{-1}\nabla\xi)\,\ud x
\end{eqnarray*}
so that $\partial_\xi\psi^*_\eps(\rho,\xi)=-\nabla\cdot(\rho B_\eps^{-1}\nabla\xi)$.
Hence $s\in\partial_\xi\psi^*_\eps(\rho, \xi)$ means
$s=-\nabla\cdot(\rho B_\eps^{-1}\nabla\xi)$.
\item
For all $\sigma\in T_{\sP}$, we have
\begin{eqnarray*}
&&\Big\langle\partial_s\psi_\eps(\rho, s),\sigma\Big\rangle
=
\lim_{\tau\to0}\left.
\frac{d}{d\tau}\psi_\eps(\rho, s+\tau\sigma)\right|_{\tau=0}\\
&=& \int\langle\nabla u, B_\eps^{-1}\nabla\omega\rangle\rho\,\ud x
= \int-u\nabla\cdot(\rho B_\eps^{-1}\nabla\omega)\,\ud x\\
&&
\Big(\text{where}\,\,\,
s=-\nabla\cdot(\rho B_\eps^{-1}\nabla u),\,\,\,
\sigma=-\nabla\cdot(\rho B_\eps^{-1}\nabla\omega)
\Big)
\\
&=& \int u\sigma\,\ud x
\end{eqnarray*}
so that $\partial_s\psi_\eps(\rho,s)=u$. 
Hence $\xi\in\partial_s\psi_\eps(\rho, s)$ means
$\xi$ satisfies $s=-\nabla\cdot(\rho B_\eps^{-1}\nabla\xi)$.
\end{enumerate}

With the above, we now reformulate \eqref{epsGF} in the form of an EDI. 
To this end, we compute, 
\begin{equation}\label{eng.diss.eqn}
\frac{d}{dt}E_\eps(\rhoe) 
= \left\langle\frac{\delta E_\eps}{\delta\rho},\pt_t\rhoe\right\rangle,
\,\,\,\text{or}\,\,\,
\frac{d}{dt}E_\eps(\rhoe) 
+ \left\langle-\frac{\delta E_\eps}{\delta\rho},\pt_t\rhoe\right\rangle = 0.
\end{equation}
By \eqref{FenchelYoung}, $\pt_t\rhoe = -\nabla^{W_\eps}E_\eps(\rhoe)=\nabla \cdot \bbs{\rho B_\eps^{-1} \nabla\frac{\delta E_\eps}{\delta \rho}}$ 
if and only if 
\[
\psi_\eps(\rho^\eps_\tau, \pt_\tau \rho^\eps_\tau ) +  \psi^*_\eps\left(\rho_\tau^\eps, -\frac{\delta E_\eps}{\delta \rho}(\rho_\tau^\eps)\right)
\leq \left\langle-\frac{\delta E_\eps}{\delta\rho},\pt_t\rhoe\right\rangle.
\]
Hence, upon integrating \eqref{eng.diss.eqn}, our gradient flow \eqref{epsGF} 
is equivalent to the following:
\begin{equation}\label{epsEDI}
E_\eps(\rhoe) + \int_0^t \left[ \psi_\eps(\rho^\eps_\tau, \pt_\tau \rho^\eps_\tau ) +  \psi^*_\eps\left(\rho_\tau^\eps, -\frac{\delta E_\eps}{\delta \rho}(\rho_\tau^\eps)\right) \right] \ud \tau  \leq  E_\eps(\rho_0^\eps). 
\end{equation}
We note that the very first step, \eqref{eng.diss.eqn} is a crucial chain rule of differentiation. This is justified in our paper due to the regularity property of our energy functional and the solution.
Precise statements will be given in Section \ref{sec3}. In general (for example, discrete or general metric space) settings, the absolute continuity of $E_\eps(\rhoe)$ (in time) and the validity of the chain rule  \eqref{eng.diss.eqn} need to be proved; cf., \cite{Tse1, hraivoronska2023variational}.

Before leaving this section, we write down the following explicit expressions.
\begin{eqnarray}
\psi^*_\eps\left(\rho_\tau^\eps, -\frac{\delta E_\eps}{\delta \rho}(\rho_\tau^\eps)\right) 
&=& \int_\Omega \left\langle
\nabla\left(\frac{\delta E_\eps}{\delta\rho}\right),
B_\eps^{-1}
\nabla\left(\frac{\delta E_\eps}{\delta\rho}\right)
\right\rangle\rhoe
\,\ud x\nonumber\\
&=&
\frac12 \int_\Omega \left\la \nabla  \log \frac{\rho^\eps_\tau}{\pie}, \be \nabla \log \frac{\rho^\eps_\tau}{\pie} \right\ra \rho^\eps_\tau \ud x,
\end{eqnarray}
and
\begin{equation}
\psi_\eps\left(\rho^\eps_\tau, \pt_\tau\rho^\eps_\tau\right)
= \frac12 \int_\Omega \la \nabla u, B_\eps^{-1} \nabla u \ra \rho_\tau^\eps 
\ud x, \quad \text{ with }\, -\nabla \cdot \bbs{\rho^\eps_\tau B_\eps^{-1} \nabla u} = \pt_\tau\rho_\tau^\eps.
\end{equation}

\subsection{Main results}\label{main}
Briefly stated, our main result is that the gradient-flow structure is
preserved in the limit, i.e., \eqref{epsGF} converges to a limiting
gradient flow. More precisely, the solution $\rho^\eps_t$ of 
\eqref{epsGF} converges (weakly) to $\rho_t$ that solves a gradient flow
with respect to a limiting Wasserstein distance $\lbar{W}$,
\begin{equation}\label{0GF}
\pt_t \rho_t 
= -\nabla^{\lbar{W}}\lbar{E}(\rho_t) 
= \nabla \cdot \bbs{\rho_t \lbar{B}^{-1} \nabla\log\frac{\rho_t}{\lbar{\pi}}}.
\end{equation}
In the above, the limiting energy is given as
\begin{equation}\label{0Eng}
\lbar{E}(\rho) = \text{KL}(\rho||\lbar{\pi}) = 
\int_\Omega \rho\log\frac{\rho}{\lbar{\pi}}\,\ud x,
\end{equation}
where the $\lbar{\pi}$ is simply the spatial average of $\pi_\eps$
with respect to some fast variable -- see \eqref{pi.ave} below.
The matrix $\lbar{B}$ is obtained by taking
appropriate average of $B_\eps$ over the fast variable weighted by the 
solution of a cell problem \eqref{homog.B} or equivalently, by considering 
the $\Gamma$-limit of a variational functional (Theorem \ref{GammaMainThm}).
The Wasserstein distance $\lbar{W}$ is related to $\lbar{B}$ just as
the way $W_\eps$ is related to $B_\eps$ -- see Section \ref{WepsWbar}. 

Similar to \eqref{epsEDI}, \eqref{psi}, and \eqref{c_psi}, 
equation \eqref{0GF} is formulated as an EDI, i.e.,
\begin{equation}\label{0EDI}
\lbar{E}(\rho_t) + \int_0^t \left[ \psi(\rho_\tau, \pt_\tau \rho_\tau ) 
+  \psi^*\left(\rho_\tau, -\frac{\delta \lbar{E}}{\delta \rho}(\rho_\tau)\right) \right] 
\ud \tau  \leq  \lbar{E}(\rho_0),
\end{equation}
where $\psi^*: \sP \times T^*_{\sP} \to \bR$ is the limiting dissipation 
functional on the cotangent plane $T^*_{\sP}$ given by
\begin{equation}\label{c_psiL0}
\psi^*(\rho, \xi):= \frac12 \int_\Omega \la \nabla \xi, \bar{B}^{-1} \nabla \xi \ra \rho \ud x,
\end{equation}
and $\psi: \sP \times T_{\sP} \to \bR$ is the limiting dissipation 
functional on the tangent plane $T_{\sP}$ given by
\begin{equation}\label{psiL0}
\psi(\rho, s):= \frac12 \int_\Omega \la \nabla u, \bar{B}^{-1} \nabla u \ra \rho \ud x, \quad \text{ with }\, s=-\nabla \cdot \bbs{\rho \bar{B}^{-1} \nabla u}.
\end{equation} 
The precise statement of the convergence of \eqref{epsEDI} to \eqref{0EDI}
will be given in Section \ref{epsEDI-0EDI}, Theorem \ref{thm1}.

Curiously, under the current setting, $\lbar{W}$ is {\em not} the 
Gromov-Hausdorff limit $W_{\text{GH}}$ of $W_\eps$ which is the common
mode of convergence for metric spaces,  
cf. \cite{villani2009optimal, gigli2013gromov, GKM20}. 
In Section \ref{sec:GH},
We have constructed examples such that $\lbar{W}$ is {\em strictly bigger} 
than $W_{\text{GH}}$. We believe that this statement is true for general
heterogeneous media.

Before proceeding further, we introduce the following notations and 
conventions. As we will often consider functions that oscillate on a 
small length scale, $0 < \eps \ll 1$, it is convenient to introduce the 
following fast variable
\begin{equation}\label{fast.var}
y=\frac{x}{\eps}.
\end{equation}
The domain for $y$ is taken to be the $n$-dimensional torus $\mathbb T^n$
when the oscillatory functions are $1$-periodic in $y$.
The notation $\lbar{A}$ means that it is derived from some averaging of $A$ 
over the fast variable $y$. For time dependent problems, we often deal with
functions defined on both space and time variables $x,t$. For ease of notation, 
given a function $f=f(x,t)$, we often use $f_t$ to denote $f_t(\cdot)$, i.e., 
the slice of $f$ at a fixed time $t$.
We will use $\wra$ and $\lra$ to denote weak and strong 
convergence in some function spaces. 
Two common spaces used are the space of probability 
measures $\sP(\Omega)$ and $L^p(\Omega)$ spaces. The value of $p$ will depend on contexts. 
For the convergence of a sequence of functions $f_\eps$ as $\eps\to0$, 
we will use the same notation even if the convergence only holds upon
extraction of subsequence. (The convergence can be established for the whole
sequence if the limiting equation has unique solution which is the case
for our linear Fokker-Planck equation \eqref{0GF}.)

Next we state the main assumptions for our results.
Some of these are made only for simplicity. They can be  
relaxed if we choose to use more technical tools.
\begin{enumerate}[(i)]
\item
Recall that the domain $\Omega$ is taken to be an $n$-dimensional
torus ${\mathbb T}^n$.
This is not to be confused with the ${\mathbb T}^n$ for the fast variable $y$. We note that the boundedness of the domain can be removed, allowing one to work in $P_2(\bR^n)$ if a confinement potential $U$ is incorporated in the dynamics. Other boundary conditions, such as Dirichlet or no-flux conditions, may also be considered.

\item
For $B_\eps$, we consider
\begin{equation}
\label{Bform}
B_\eps(x) = B\left(\frac{x}{\eps}\right),\,\,\,\text{or}\,\,\,
B_\eps(x) = B(y),
\end{equation}
where $B(\cdot)$ is $1$-periodic.
Furthermore, $B(\cdot)$ is bounded and uniformly positive definite, i.e.,
there are $C_1, C_2 > 0$ such that for all $y\in\mathbb T^n$, it holds that
\begin{equation}
C_1 I \leq B(y) \leq C_2 I.
\end{equation}
This form of $B_\eps$ can certainly be generalized to allow for dependence on the slow variable: $B_\eps(x)=B(x,\frac{x}\eps)$.
For simplicity, we assume further that $B$ is smooth in $y$.

\item
For $\pie$, we consider the following form of separation of length scales:
\begin{equation}
\label{piform} 
\pie(x) = \pi\left(x,\frac{x}{\eps}\right).
\end{equation}
In the above, $\pi$ is $1$-periodic in the fast variable 
$\displaystyle y=\frac{x}{\eps}$.
We further assume that $\pi$ is smooth in both $x$ and $y$ and 
is bounded away from zero and from above uniformly in $\eps>0$. 
The following notation referring to an averaged version 
of $\pi$ will be used in this paper:
\begin{equation}\label{pi.ave}
\lbar{\pi}(x) = \int \pi(x,y)\ud y.
\end{equation}
As concrete examples, $\pie$ can be taken as
\begin{equation}\label{pie}
\pie^{\text{I}}(x) = \pi_0(x) + \pi_1\left(x,\frac{x}{\eps}\right),
\quad\text{or}\quad
\pie^{\text{II}}(x) = \pi_0(x) + \eps\pi_1\left(x,\frac{x}{\eps}\right).
\end{equation}
Then  $\pie^{\text{I}}$ and $\pie^{\text{II}}$ converge as follow:
\begin{equation}
\pie^{\text{I}}(x) \wra \lbar{\pi}^{\text{I}}(x) :=
\pi_0(x) + \int_{\mathbb T^n}\pi_1(x,y)\,\ud y
,\,\,\,\text{and}\,\,\,
\pie^{\text{II}}(x) \lra \lbar{\pi}^{\text{II}}(x) 
:= \pi_0(x).
\end{equation}
We thus call $\pie^{\text{I}}$ the oscillatory case while 
$\pie^{\text{II}}$ the uniform case.
(We refer to the work \cite{dupuis2012large} for large deviations for multiscale diffusion  with $\pie^{\text{II}}$.)

\item	The initial data $\rho^\eps_0$ is bounded away
from zero and from above uniformly in $\eps>0$. It is assumed to be
well-prepared in the following sense,
\begin{equation}\label{id_con}
\text{there is a $\rho_0$ such that as
$\eps\to0$, it holds $\rho^\eps_0 \wra \rho_0$
and
$E_\eps(\rho^\eps_0) \to \lbar{E}(\rho_0), \quad \text{ as } \, \eps \to 0,$}
\end{equation}
where $E_\eps$ and $\lbar{E}$ are given by \eqref{freeEeps} and \eqref{0Eng}.
%\begin{equation}
%E(\rho) := \KL(\rho||\pi) = \int_\Omega \rho(x) \log \frac{\rho(x)}{\pi(x)} \ud x, \quad \pi(x) = \lim_{\eps \to 0} \pie(x).
%\end{equation}
More precise smoothness requirements on $\rho_0$ will be listed in 
Lemmas \ref{lem_reg}, 
\ref{lem_timeder_reg}, and Corollaries \ref{cor.f} and \ref{cor.f.time.der}.
\end{enumerate}

We have the following remarks about our results.
\begin{rem}\label{rem_thm}
\hspace{1in}
\begin{enumerate}
\item	
As $\pie^{\text{II}}$ can be treated as a special case of $\pie^{\text{I}}$, 
or more generally, of $\pie$, we will concentrate on the proof for $\pie$.
Our result is also consistent with the statement obtained by 
using the asymptotic expansion described in Appendix \ref{asym.exp}.
At the end of that section, we also make
some remarks about the revised statement for $\pie^{\text{II}}$.

\item	The approach we take resembles the work of Forkert-Maas-Portinale 
\cite{forkert2022evolutionary} on the convergence of a finite volume scheme
for a Fokker-Planck equation. By and large, the framework of their (numerical) approximation
enjoys stronger regularity, while our current problem concentrates on
the oscillation of the underlying medium.
\end{enumerate}
\end{rem}

\section{Some a-priori estimates}\label{sec3}

In order to study the asymptotic behavior as $\eps \to 0$, we first establish   
some a-priori estimates for our $\eps$-gradient flow system
\eqref{epsFP} (or \eqref{epsGF}). These would then 
give us the space-time compactness and convergence. These variational estimates 
for linear parabolic equations are standard but we give a brief proof for completeness.

First, we recast \eqref{epsFP} as
\begin{equation}
\pt_t \rhoe = \nabla \cdot \bbs{\pi_\eps B_\eps^{-1} \nabla \frac{\rhoe}{\pi_\eps}}.
\end{equation}
Denote $\displaystyle f^\eps_t := \frac{\rho^\eps_t}{\pi_\eps}$. 
Then $f^\eps_t$ satisfies the following {\em backward} equation
\begin{equation}\label{backward}
\pt_t f^\eps_t = \frac{1}{\pi_\eps} \nabla \cdot \bbs{ \pi_\eps B_\eps^{-1} \nabla f^{\eps}_t } =: L_\eps (f^\eps _t).
\end{equation}
It is easy to verify that $L_\eps$ is self-adjoint in $L^2(\pi_\eps)$, i.e.,
\begin{equation}
\la L_\eps u, v \ra_{\pi_\eps} = \la u, L_\eps v \ra_{\pi_\eps}, \quad \forall u, v \in L^2(\pi_\eps),
\end{equation}
where $\la \cdot, \cdot \ra_{\pie}$ 
denotes the $\pi_\eps$-weighted $L^2$ inner product,
$\displaystyle \la u, v\ra_{\pie} 
:= \int_\Omega u(x)v(x)\pie(x)\ud x$.

We recall here the standing assumptions of
uniform positive definiteness of $B_\eps$ and 
uniform positivity and boundedness of $\pie$ as stated in 
\eqref{Bform} and \eqref{piform} in Section \ref{main}.
We then have the following uniform estimates for $f^\eps_t$.
\begin{lem}\label{lem_reg}
Let $f^\eps_0$ be the initial data for \eqref{backward}. We define,
\begin{eqnarray}
A_0 & := & \sup_{\eps>0}\int_\Omega (f_0^\eps)^2\pi_\eps\ud x,\label{A0}\\
B_0 & := & \sup_{\eps>0}\int_\Omega 
\langle \nabla f_0^\eps, B_\eps^{-1}\pi_\eps\nabla f_0^\eps\rangle\ud x.
\label{B0}
\end{eqnarray}
Let $0 < T < \infty$ be given. We have the following statements.
\begin{enumerate}
\item If $0 < m_0 < \inf f_0^\eps < M_0 < \infty$ on $\Omega$ for 
some finite positive constants $m_0$ and $M_0$, then
$m_0 < \inf f_t^\eps < M_0$ for all $t > 0$.

\item If $A_0 < \infty$, then 
$f^\eps \in L^\infty((0,T);L^2(\Omega))\bigcap L^2((0,T);H^1(\Omega))$ with the following uniform-in-$\eps$ bound: 
for all $0 < t < T$,  
\begin{equation}\label{L2SpaceEst}
\frac12||f_t^\eps||_{\pi_\eps}^2 + \int_0^t \int_\Omega
\langle \nabla f_s^\eps, B_\eps^{-1}\pi_\eps\nabla f_s^\eps\rangle\ud x\ud s
=\frac12||f_0^\eps||_{\pi_\eps}^2 \leq A_0.
\end{equation}

\item If $B_0 < \infty$ (which by Poincare inequality implies 
$A_0 < \infty$), then
$$
f^\eps \in L^\infty((0,T);H^1(\Omega))\bigcap H^1((0,T);L^2(\Omega)) 
$$
with the following uniform-in-$\eps$ bound:
for all $0 < t < T$,  
\begin{equation}\label{H1SpaceTimeEst}
\frac12\int_\Omega
\langle \nabla f_0^\eps, B_\eps^{-1}\pi_\eps\nabla f_0^\eps\rangle\ud x
+ \int_0^t \int_\Omega(\partial_s f_s^\eps)^2\pi_\eps\ud x\ud s
=\frac12\int_\Omega
\langle \nabla f_0^\eps, B_\eps^{-1}\pi_\eps\nabla f_0^\eps\rangle\ud x \leq \frac{B_0}{2}.
\end{equation}
From \eqref{backward} and 
$\displaystyle \int_0^t \int_\Omega(\partial_s f_s^\eps)^2\pi_\eps\ud x\ud s\leq \frac{B_0}{2}$, we also have
\begin{equation}\label{H2SpaceEst}
\sup_{\eps > 0}\int_0^T\int_\Omega
\Big(\nabla\cdot\big(B_\eps^{-1}\pi_\eps\nabla f_s^\eps\big)\Big)^2
\ud x
\ud s < \infty.
\end{equation}
\end{enumerate}
\end{lem}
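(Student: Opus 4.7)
The proof rests on standard energy-method testing against suitable cutoffs or against $f_t^\eps$ and $\partial_t f_t^\eps$, applied to the symmetric backward equation
\begin{equation*}
\pi_\eps\,\partial_t f_t^\eps \;=\; \nabla\cdot\bigl(\pi_\eps B_\eps^{-1}\nabla f_t^\eps\bigr).
\end{equation*}
For fixed $\eps>0$ the coefficients are smooth on the torus $\Omega$ and uniformly elliptic, so classical linear parabolic theory provides a smooth solution on any time interval on which the initial datum is smooth; the estimates below can then be extended to the stated classes of initial data by density. The key point is that the constants we obtain depend only on the $\eps$-uniform bounds for $B(\cdot)$, for $\pi_\eps$ and for $A_0, B_0$, so the estimates are $\eps$-uniform.

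For Part (1) (the maximum principle), I would test the equation against $(f_t^\eps-M_0)^+$. Because $\pi_\eps$ does not depend on $t$,
\begin{equation*}
\tfrac{1}{2}\tfrac{d}{dt}\!\int_\Omega \pi_\eps\bigl[(f_t^\eps-M_0)^+\bigr]^2\ud x
\;=\; -\!\int_\Omega \pi_\eps\,\bigl\langle B_\eps^{-1}\nabla(f_t^\eps-M_0)^+,\nabla(f_t^\eps-M_0)^+\bigr\rangle\ud x \;\leq\;0,
\end{equation*}
and the left side vanishes at $t=0$ by hypothesis, hence $f_t^\eps\leq M_0$. The lower bound follows by testing against $(m_0-f_t^\eps)^+$.

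For Part (2), I would test against $f_t^\eps$ itself: integration by parts yields
\begin{equation*}
\tfrac{1}{2}\tfrac{d}{dt}\|f_t^\eps\|_{\pi_\eps}^2
\;=\; -\!\int_\Omega \pi_\eps\bigl\langle B_\eps^{-1}\nabla f_t^\eps,\nabla f_t^\eps\bigr\rangle\ud x,
\end{equation*}
and integrating in time gives \eqref{L2SpaceEst}. The uniform lower bound $B_\eps^{-1}\geq C_2^{-1}I$ and the uniform positivity of $\pi_\eps$ then yield the claimed membership in $L^\infty((0,T);L^2(\Omega))\cap L^2((0,T);H^1(\Omega))$ with bounds depending only on $A_0$, $C_1$, $C_2$ and $\|\pi_\eps\|_\infty$, $\inf\pi_\eps$. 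For Part (3), I would test against $\partial_t f_t^\eps$: since $\pi_\eps$ and $B_\eps$ are time-independent,
\begin{equation*}
\int_\Omega \pi_\eps (\partial_t f_t^\eps)^2\ud x
\;=\; -\tfrac{1}{2}\tfrac{d}{dt}\!\int_\Omega \pi_\eps\bigl\langle B_\eps^{-1}\nabla f_t^\eps,\nabla f_t^\eps\bigr\rangle\ud x,
\end{equation*}
whose time integral is exactly \eqref{H1SpaceTimeEst}. Reading the equation \eqref{backward} as $\pi_\eps\partial_t f_t^\eps = \nabla\cdot(\pi_\eps B_\eps^{-1}\nabla f_t^\eps)$ and using $C_\pi^{-1}\leq \pi_\eps\leq C_\pi$ translates the $L^2$-bound on $\partial_t f^\eps$ into \eqref{H2SpaceEst}.

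The only genuine technicality is rigorously justifying the two testings against $f_t^\eps$ and $\partial_t f_t^\eps$ when the initial datum merely satisfies $A_0<\infty$ or $B_0<\infty$ rather than being smooth. I would handle this in the standard way: approximate $f_0^\eps$ by smooth data $f_0^{\eps,\delta}$ satisfying the same bounds, obtain \eqref{L2SpaceEst} and \eqref{H1SpaceTimeEst} for the smooth solutions $f^{\eps,\delta}$, then pass $\delta\to 0$ using the linear contraction on $L^2(\pi_\eps)$ and lower semicontinuity of the Dirichlet form. The main obstacle is the usual one — making the test by $\partial_t f^\eps$ admissible in Part (3) — but since the approximating solutions are classical and the estimates are a priori, this is routine once the right approximation scheme (e.g.\ Galerkin in a basis of eigenfunctions of $L_\eps$, or mollification in $x$) is set up.
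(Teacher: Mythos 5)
Your proof is correct and, for Parts (2) and (3), follows exactly the same route as the paper: test the symmetric equation against $f^\eps_t$ to get \eqref{L2SpaceEst} and against $\partial_t f^\eps_t$ to get \eqref{H1SpaceTimeEst}, then read \eqref{H2SpaceEst} directly off the PDE. The one genuine difference is Part (1): the paper rewrites \eqref{backward} in non-divergence form, $\partial_t f^\eps_t = B_\eps^{-1}:D^2 f^\eps_t + \pi_\eps^{-1}\langle\nabla(B_\eps^{-1}\pi_\eps),\nabla f^\eps_t\rangle$, and invokes the classical parabolic maximum principle directly, whereas you run a Stampacchia truncation argument testing against $(f^\eps_t-M_0)^+$ and $(m_0-f^\eps_t)^+$. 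Both are standard; the truncation route has the advantage of being purely variational and working with weak solutions without needing $C^2$ regularity of the coefficients, while the paper's route is shorter but implicitly relies on classical regularity. One small caveat: the truncation argument gives the non-strict bounds $m_0\le f^\eps_t\le M_0$, whereas the lemma is stated with strict inequalities — the paper's appeal to the (strong) maximum principle would deliver the strict version; to get it by your method you would need an additional step (Harnack or strong maximum principle). Your remarks on justifying the formal testings by density are sensible and go beyond what the paper spells out, but are standard and do not change the substance.
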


\begin{proof} Note that
\[
\pt_t f_t^\eps = B_\eps^{-1}:D^2f_t^\eps + 
\frac{1}{\pie}\big\langle\nabla(B_\eps^{-1}\pie),\nabla f_t^\eps\big\rangle.
\]
By the positive definitenss of $B_\eps$, 
statement (1) then follows directly from maximum principle.

Next, both \eqref{L2SpaceEst} and \eqref{H1SpaceTimeEst} follows
from simple energy identity. For the former, we compute
\begin{eqnarray*}
\frac{d}{dt}\frac12||f_t^\eps||^2_{\pi_\eps}
= \int_\Omega f_t^\eps\partial_t f_t^\eps \pi_\eps\ud x
= -\int_\Omega \langle \nabla f_t^\eps, B_\eps^{-1}\pi_\eps \nabla f_t^\eps
\rangle \ud x.
\end{eqnarray*}
Integration in time from $0$ to $t$ gives \eqref{L2SpaceEst}.

For \eqref{H1SpaceTimeEst}, we compute
\begin{eqnarray*}
&&\frac{d}{dt}\frac12\int_\Omega
\langle \nabla f_t^\eps, B_\eps^{-1}\pi_\eps \nabla f_t^\eps\rangle
\ud x
= \int_\Omega
\langle \nabla \partial_tf_t^\eps, B_\eps^{-1}\pi_\eps \nabla f_t^\eps\rangle
\ud x\\
&=& -\int_\Omega
\partial_tf_t^\eps\nabla\cdot\big(B_\eps^{-1}\pi_\eps \nabla f_t^\eps\big)
\ud x
= -\int_\Omega (\partial_tf_t^\eps)^2\pi_\eps \ud x.
\end{eqnarray*}
Integration in time from $0$ to $t$ again gives the result.
Estimate \eqref{H2SpaceEst} follows from definition.
\end{proof}

The above and Fubini's Theorem immediately leads to the following 
compactness results.
\begin{cor} \label{cor.f}
If $B_0 < \infty$, then there is a subsequence $f^\eps$ 
and an $f\in L^2(0,T; L^2(\Omega))$ such that
$f^\eps\longrightarrow f$ in $L^2(0,T; L^2(\Omega))$, i.e.,
\begin{equation}\label{L2SpaceTimeConv}
\int_0^T\int_\Omega|f^\eps_t-f_t|^2\ud x\ud t\rightarrow0.
\end{equation}
Furthermore, we have
\begin{equation}\label{ae.conv}
\int_\Omega|f^\eps_t-f_t|^2\ud x\rightarrow0
\quad \text{for a.e. $t\in[0,T]$.}
\end{equation}
\end{cor}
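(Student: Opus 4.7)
The plan is to derive strong $L^2((0,T);L^2(\Omega))$ compactness of $\{f^\eps\}$ from the two uniform estimates in Lemma \ref{lem_reg} via Aubin--Lions, and then to extract the pointwise-in-$t$ statement from Fubini. Both steps are essentially mechanical once the a priori bounds are in place.

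First, I would translate the weighted identity \eqref{H1SpaceTimeEst} into unweighted estimates. Since the standing assumptions give $c \leq \pi_\eps \leq C$ and $c\,I \leq B_\eps^{-1} \leq C\,I$ uniformly in $\eps$, the identity \eqref{H1SpaceTimeEst} implies that $\{\nabla f^\eps\}$ is uniformly bounded in $L^\infty((0,T);L^2(\Omega))$ while $\{\partial_t f^\eps\}$ is uniformly bounded in $L^2((0,T);L^2(\Omega))$. Combined with the $L^\infty((0,T);L^2(\Omega))$ control from \eqref{L2SpaceEst} (available since $B_0<\infty$ together with Poincar\'e on the compact torus forces $A_0<\infty$), one obtains that $\{f^\eps\}$ is uniformly bounded in $L^2((0,T);H^1(\Omega))$ and $\{\partial_t f^\eps\}$ is uniformly bounded in $L^2((0,T);L^2(\Omega))$.

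Second, I would invoke the Aubin--Lions--Simon compactness theorem with the triple $H^1(\Omega) \hookrightarrow L^2(\Omega) \hookrightarrow L^2(\Omega)$: the first embedding is compact by Rellich--Kondrachov on the compact torus $\Omega = \mathbb{T}^n$, and the second is continuous. This yields precompactness of $\{f^\eps\}$ in $L^2((0,T);L^2(\Omega))$, so that along a subsequence (not relabeled) one obtains strong convergence to some $f \in L^2((0,T);L^2(\Omega))$; this is precisely \eqref{L2SpaceTimeConv}.

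Third, for the a.e.-in-$t$ statement, I would apply Fubini. The nonnegative scalar function $t \mapsto \int_\Omega |f_t^\eps - f_t|^2 \ud x$ is integrable over $[0,T]$, and \eqref{L2SpaceTimeConv} says it tends to $0$ in $L^1(0,T)$; passing to a further subsequence yields pointwise a.e.\ convergence, giving \eqref{ae.conv}. The only nonelementary input is the Aubin--Lions--Simon lemma, whose hypotheses are trivially met on the smooth compact torus, so I do not anticipate any substantive obstacle in this proof.
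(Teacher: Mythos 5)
Your proposal is correct and matches the paper's intended argument: the authors state the corollary follows "immediately" from Lemma \ref{lem_reg} and Fubini's theorem, implicitly relying on the same Aubin--Lions compactness you spell out, and the a.e.-in-$t$ convergence is obtained exactly as you describe by extracting a further subsequence from the $L^1(0,T)$ convergence of $t\mapsto\|f^\eps_t-f_t\|_{L^2(\Omega)}^2$. The only additional work in your write-up is making explicit the passage from the $\pi_\eps$- and $B_\eps$-weighted identities to unweighted Sobolev bounds via the standing ellipticity and boundedness assumptions, which is routine.
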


For our application, we will also need some regularity
estimates for the time derivative of $f^\eps$. Define
$h_t^\eps := \partial_t f^\eps_t$. Then it satisfies the same 
equation \eqref{backward}, i.e.,
\begin{equation}\label{TimeDerEqn}
\pt_t h^\eps_t = \frac{1}{\pi_\eps} \nabla \cdot \bbs{ \pi_\eps B_\eps^{-1} \nabla h^{\eps}_t } =: L_\eps (h^\eps _t).
\end{equation}
As a direct application of Lemma \ref{lem_reg} and Corollary \ref{cor.f}, 
we have the following lemma and corollary.

\begin{lem}\label{lem_timeder_reg}
Let $h^\eps_0 = \partial_t f_t^\eps|_{t=0}$ be the initial data for \eqref{TimeDerEqn}. We define,
\begin{eqnarray}
C_0 & := & \sup_{\eps>0}\int_\Omega (h_0^\eps)^2\pi_\eps\ud x
\,\,\left(=
\sup_{\eps>0}\int_\Omega (\pt_t f_0^\eps)^2\pi_\eps\ud x
\right),\label{C0}
\\
D_0 & := & \sup_{\eps>0}\int_\Omega
\langle \nabla h_0^\eps, B_\eps^{-1}\pi_\eps\nabla h_0^\eps\rangle\ud x
\,\,\left(=
\sup_{\eps>0}\int_\Omega
\langle \nabla (\pt_t f_0^\eps), B_\eps^{-1}\pi_\eps\nabla (\pt_t f_0^\eps)\rangle\ud x\right).
\label{D0}
\end{eqnarray}
Let $0 < T < \infty$ be given. We have the following statements.
\begin{enumerate}
\item If $C_0 < \infty$, then
$h^\eps \in L^\infty((0,T);L^2(\Omega))\bigcap L^2((0,T);H^1(\Omega))$.
In particular, for all $0 < t < T$, we have the following identity,
\begin{equation}\label{L2SpaceEst_timeder}
\frac12||h_t^\eps||_{\pi_\eps}^2 + \int_0^t\int_\Omega
\langle \nabla h_s^\eps, B_\eps^{-1}\pi_\eps\nabla h_s^\eps\rangle\ud x\ud s
=\frac12||h_0^\eps||_{\pi_\eps}^2.
\end{equation}

\item If $D_0 < \infty$, then
$h^\eps \in L^\infty((0,T);H^1(\Omega))\bigcap H^1((0,T);L^2(\Omega))$.
In particular, for all $0 < t < T$, we have the following identity,
\begin{equation}\label{H1SpaceTimeEst_timeder}
\frac12\int_\Omega
\langle \nabla h_t^\eps, B_\eps^{-1}\pi_\eps\nabla h_t^\eps\rangle\ud x
+ \int_0^t \int_\Omega(\partial_s h_s^\eps)^2\pi_\eps\ud x\ud s
=\frac12\int_\Omega
\langle \nabla h_0^\eps, B_\eps^{-1}\pi_\eps\nabla h_0^\eps\rangle\ud x
\end{equation}

\end{enumerate}
\end{lem}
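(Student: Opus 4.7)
The plan is to exploit the observation, already noted in \eqref{TimeDerEqn}, that because the operator $L_\eps$ has time-independent coefficients, differentiating \eqref{backward} in $t$ yields $\pt_t h^\eps = L_\eps h^\eps$ with $h^\eps_t := \pt_t f^\eps_t$. Once this is justified rigorously, the two parts of the lemma reduce immediately to applying parts (2) and (3) of Lemma \ref{lem_reg} to $h^\eps$ in place of $f^\eps$, with the initial datum $h^\eps_0 = L_\eps f_0^\eps$ playing the role of $f^\eps_0$. The quantities $C_0$ and $D_0$ are precisely the analogues of $A_0$ and $B_0$ for this new function.

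The energy identities themselves are derived verbatim as in Lemma \ref{lem_reg}. For (1), I would test \eqref{TimeDerEqn} against $h^\eps_t\,\pi_\eps$ and integrate by parts in $x$ to obtain
\[
\frac{d}{dt}\,\tfrac12\|h^\eps_t\|^2_{\pi_\eps} = -\int_\Omega\langle\nabla h^\eps_t,\,B_\eps^{-1}\pi_\eps\nabla h^\eps_t\rangle\,\ud x,
\]
then integrate from $0$ to $t$ and use $C_0<\infty$ to conclude the uniform $L^\infty_tL^2_x \cap L^2_tH^1_x$ control together with \eqref{L2SpaceEst_timeder}. For (2), I would test instead against $\pt_t h^\eps_t\,\pi_\eps$ and use the self-adjointness of $L_\eps$ in $L^2(\pi_\eps)$ to get
\[
\frac{d}{dt}\,\tfrac12\int_\Omega\langle\nabla h^\eps_t,\,B_\eps^{-1}\pi_\eps\nabla h^\eps_t\rangle\,\ud x = -\int_\Omega(\pt_t h^\eps_t)^2\pi_\eps\,\ud x,
\]
which, after integration in time and use of $D_0<\infty$, yields \eqref{H1SpaceTimeEst_timeder}.

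The main obstacle, and essentially the only point that is not purely mechanical, is the rigorous justification that $h^\eps$ is a bona fide weak solution of \eqref{TimeDerEqn} in the correct energy class with the assigned initial datum $h^\eps_0$. I would handle this via forward difference quotients in time: set $g^{\eps,\tau}_t := \tau^{-1}(f^\eps_{t+\tau} - f^\eps_t)$, which, by linearity and time-invariance of $L_\eps$, solves $\pt_t g^{\eps,\tau} = L_\eps g^{\eps,\tau}$ with initial datum $g^{\eps,\tau}_0 = \tau^{-1}(f^\eps_\tau - f^\eps_0)$. Applying the identity \eqref{L2SpaceEst} to $g^{\eps,\tau}$ and letting $\tau\to 0^+$, I would identify the limit as $h^\eps$ in the relevant energy space. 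The convergence $g^{\eps,\tau}_0 \to h^\eps_0$ in $L^2(\pi_\eps)$ (respectively, in the weighted $H^1$) is precisely what the hypothesis $C_0<\infty$ (respectively, $D_0<\infty$) encodes, together with mild compatibility on $f_0^\eps$; with this in place the two energy computations above become fully rigorous and the stated conclusions follow.
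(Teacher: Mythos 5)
Your proposal is correct and follows essentially the same route as the paper, which simply observes that $h^\eps=\pt_t f^\eps$ satisfies the same equation \eqref{TimeDerEqn} and repeats the two energy identities of Lemma \ref{lem_reg} verbatim with $C_0, D_0$ in place of $A_0, B_0$. Your additional difference-quotient argument justifying that $h^\eps$ is a genuine solution with datum $h_0^\eps$ is a sound extra layer of rigor, but it is not part of the paper's (terser) proof.
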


\begin{cor}\label{cor.f.time.der} If $D_0 < \infty$, then there is a subsequence $h^\eps$
and an $h\in L^2(0,T;L^2(\Omega)$ such that
$h^\eps\longrightarrow h$ in $L^2(0,T;L^2(\Omega))$, i.e.
\begin{equation}\label{L2SpaceTimeDerConv}
\int_0^T\int_\Omega|h^\eps_t-h_t|^2\ud x\ud t\rightarrow0.
\end{equation}
Furthermore, we have
\begin{equation}\label{ae.time.der.conv}
\int_\Omega|h^\eps_t-h_t|^2\ud x\rightarrow0,
\,\,\,\text{for a.e. $t\in[0,T]$.}
\end{equation}
\end{cor}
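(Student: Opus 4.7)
\medskip

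\noindent\textbf{Proof proposal.} The plan is to mimic the argument already used for Corollary \ref{cor.f}, replacing $f^\eps$ by $h^\eps$ throughout. Since this statement concerns the time derivative, the only input we need is the pair of estimates \eqref{L2SpaceEst_timeder}--\eqref{H1SpaceTimeEst_timeder} from Lemma \ref{lem_timeder_reg} under the assumption $D_0<\infty$.

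First, I would invoke Lemma \ref{lem_timeder_reg}(2) directly: because $D_0<\infty$, the identity \eqref{H1SpaceTimeEst_timeder} gives a uniform (in $\eps$) bound on $h^\eps$ in $L^\infty((0,T);H^1(\Omega))$, together with a uniform bound on $\pt_t h^\eps$ in $L^2((0,T);L^2(\Omega))$, using the uniform ellipticity of $B_\eps^{-1}$ and the uniform positivity and boundedness of $\pi_\eps$ from assumptions \eqref{Bform} and \eqref{piform}. In particular $h^\eps$ is bounded in $H^1((0,T);L^2(\Omega))\cap L^2((0,T);H^1(\Omega))$, uniformly in $\eps$.

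Next I would apply the Aubin--Lions compactness lemma. Since $\Omega=\mathbb T^n$ is compact, the embedding $H^1(\Omega)\hookrightarrow L^2(\Omega)$ is compact (Rellich--Kondrachov), and the embedding $L^2(\Omega)\hookrightarrow L^2(\Omega)$ is continuous. The uniform bounds above then yield a subsequence (still denoted $h^\eps$) and a limit $h\in L^2(0,T;L^2(\Omega))$ such that
\begin{equation*}
\int_0^T\!\!\int_\Omega |h^\eps_t - h_t|^2 \,\ud x\,\ud t \;\longrightarrow\; 0,
\end{equation*}
which is \eqref{L2SpaceTimeDerConv}.

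Finally, to obtain the pointwise-in-$t$ statement \eqref{ae.time.der.conv}, I would set $g^\eps(t):=\int_\Omega|h^\eps_t-h_t|^2\ud x$. By the previous step, $g^\eps\to 0$ in $L^1(0,T)$, so by a standard subsequence extraction (passing to a further subsequence if necessary), $g^\eps(t)\to 0$ for a.e. $t\in[0,T]$, which is exactly the claim. No real obstacle arises here: the entire argument is verbatim the same as in the proof of Corollary \ref{cor.f}, since $h^\eps$ satisfies equation \eqref{TimeDerEqn} which has the identical structure to \eqref{backward}, and the only substantive input is the availability of the energy identities in Lemma \ref{lem_timeder_reg}.
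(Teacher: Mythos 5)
Your proposal is correct and follows essentially the same route as the paper, which simply reuses the estimates of Lemma \ref{lem_timeder_reg} (the analogue of Lemma \ref{lem_reg}) and the compactness argument of Corollary \ref{cor.f}: uniform bounds in $L^\infty((0,T);H^1(\Omega))\cap H^1((0,T);L^2(\Omega))$, strong $L^2((0,T);L^2(\Omega))$ compactness, and then a further subsequence extraction for the a.e.-in-$t$ convergence (consistent with the paper's convention of not relabeling subsequences). Your explicit invocation of Aubin--Lions just fills in the compactness step the paper leaves implicit.
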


Recall Assumption (iii) in Section \ref{main} for the invariant measure $\pie$.
For the convenience of our upcoming proof, we collect the
necessary convergence results in the following lemma.
\begin{lem}\label{init.ass}
Suppose $A_0, B_0, C_0$ and $D_0 < \infty$.
Then (from Lemmas \ref{lem_reg} and \ref{lem_timeder_reg}) we have
\begin{equation}
f^\eps \in L^\infty((0,T);H^1(\Omega))\bigcap H^1((0,T);L^2(\Omega)),
\quad\text{and}\quad
\partial_t f^\eps  \in L^\infty((0,T);H^1(\Omega))\bigcap H^1((0,T);L^2(\Omega)).
\end{equation}
Furthermore (from Corollaries \ref{cor.f} and \ref{cor.f.time.der}),
up to $\eps$-subsequence, we have
\begin{equation}\label{conv1}
f^\eps  \longrightarrow f ,\quad\text{and}\quad
\pt_tf^\eps  \longrightarrow \pt_tf \quad\text{in $L^2((0,T);L^2(\Omega))$}.
\end{equation}
Upon defining $\rho_t = f_t\lbar{\pi}$, we have
\begin{eqnarray}\label{f.st.L2L2}
\frac{\rho^\eps}{\pie}\,\,\,(=f^\eps) 
&\longrightarrow & \frac{\rho}{\lbar{\pi}}\,\,\,(=f)
\quad\text{in $L^2((0,T);L^2(\Omega))$,}\\
\rho^\eps 
 &\rightharpoonup & 
\rho
\quad\text{in $L^2((0,T);L^2(\Omega))$,}
\end{eqnarray}
and
\begin{eqnarray}\label{ft.wk.L2L2}
\frac{\pt_t\rho^\eps}{\pie}\,\,\,(=\pt_tf^\eps) 
&\longrightarrow & \frac{\pt_t\rho}{\lbar{\pi}}\,\,\,(=\pt_tf)
\quad\text{in $L^2((0,T);L^2(\Omega))$,}\\
\pt_t\rho^\eps 
 &\rightharpoonup & 
\pt_t\rho
\quad\text{in $L^2((0,T);L^2(\Omega))$.}
\label{conv5}
\end{eqnarray}
Instead of strong and weak convergence in 
$L^2(0,T;L^2(\Omega))$,
by \eqref{ae.conv} and \eqref{ae.time.der.conv}, 
statements \eqref{conv1}--\eqref{conv5} also hold 
with the same respective strong and weak topologies in $L^2(\Omega)$
for a.e. $t\in[0,T]$.
\end{lem}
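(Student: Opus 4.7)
The statement is largely a packaging of previously established facts, so I plan to organize the proof in that way. First, the regularity claims $f^\eps,\ \pt_t f^\eps \in L^\infty((0,T);H^1(\Omega))\cap H^1((0,T);L^2(\Omega))$ follow immediately by applying Lemma \ref{lem_reg} to $f^\eps$ (using $A_0,B_0<\infty$) and Lemma \ref{lem_timeder_reg} to $h^\eps:=\pt_t f^\eps$ (using $C_0,D_0<\infty$). The strong $L^2((0,T);L^2(\Omega))$ convergences $f^\eps\to f$ and $\pt_t f^\eps\to\pt_t f$, together with their a.e.-$t$ strong-in-$L^2(\Omega)$ upgrades, are exactly the content of Corollaries \ref{cor.f} and \ref{cor.f.time.der}.

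The only genuinely new ingredient is identifying the weak limits of $\rho^\eps=f^\eps\pie$ and $\pt_t\rho^\eps=(\pt_t f^\eps)\pie$. For this I would invoke the standard weak convergence of $\pie$ to $\bar\pi$: by Assumption (iii), $\pie(x)=\pi(x,x/\eps)$ is smooth, uniformly bounded, and $1$-periodic in the fast variable $y=x/\eps$, so the classical Riemann--Lebesgue-type result for periodic oscillatory functions gives $\pie\rightharpoonup\bar\pi$ weakly in every $L^p(\Omega)$ with $1\le p<\infty$ (and weakly-$*$ in $L^\infty(\Omega)$). Fixing any $\varphi\in L^2((0,T);L^2(\Omega))$, I would then split
\[
\int_0^T\!\!\int_\Omega (f^\eps\pie - f\bar\pi)\,\varphi\,\ud x\,\ud t
= \int_0^T\!\!\int_\Omega (f^\eps - f)\,\pie\,\varphi\,\ud x\,\ud t
+ \int_0^T\!\!\int_\Omega f\,(\pie-\bar\pi)\,\varphi\,\ud x\,\ud t.
\]
The first integral vanishes by Cauchy--Schwarz, the strong convergence of $f^\eps$, and the uniform $L^\infty$-bound on $\pie$; the second vanishes because $f\varphi\in L^1((0,T);L^1(\Omega))$ and $\pie-\bar\pi\rightharpoonup 0$ in $L^\infty$-weak-$*$. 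This yields $\rho^\eps\rightharpoonup\rho=f\bar\pi$ in $L^2((0,T);L^2(\Omega))$, and the same argument applied with $\pt_t f^\eps$ in place of $f^\eps$ gives $\pt_t\rho^\eps\rightharpoonup\pt_t\rho$.

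The a.e.-$t$ versions of all four convergences follow by running the strong-$\times$-weak argument slice-by-slice: \eqref{ae.conv} provides $f^\eps_t\to f_t$ strongly in $L^2(\Omega)$ for a.e. $t$, while $\pie$ (independent of $t$) continues to satisfy $\pie\rightharpoonup\bar\pi$, so testing against any $\varphi\in L^2(\Omega)$ and splitting as above gives the weak $L^2(\Omega)$ convergence of $\rho^\eps_t$; the analogous slice statement for $\pt_t\rho^\eps_t$ uses \eqref{ae.time.der.conv}. I do not expect any substantive obstacle: the lemma is essentially a collation of the a-priori estimates of Section \ref{sec3} with the classical ``strong-times-weak'' product rule, and the only care needed is in verifying that the test function spaces used for each weak convergence claim are available, which they are by the uniform $L^\infty$-bound on $\pie$.
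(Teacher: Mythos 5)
Your proposal is correct and matches the paper's intent: the paper states this lemma as a direct collation of Lemmas \ref{lem_reg} and \ref{lem_timeder_reg} and Corollaries \ref{cor.f} and \ref{cor.f.time.der}, with the weak limits of $\rho^\eps=f^\eps\pi_\eps$ and $\pt_t\rho^\eps$ left as the standard ``strong $\times$ weak-$*$'' product argument using $\pi_\eps\rightharpoonup\lbar{\pi}$, which is exactly what you spell out. No gaps.
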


\begin{rem}
Note that currently our approach does require a high
degree of regularity for the initial data. Its existence and 
construction would require the characterization of precise oscillations of the 
solution which in principle can be done by considering second and higher order
cell problems.
However, we believe this requirement can be much relaxed by means of parabolic 
regularity. For example, if $A_0 < \infty$, then $f_t^\eps\in H^1(\Omega)$ for 
some $t>0$ and if $B_0 < \infty$, then  $\partial_t f^\eps_t\in L^2(\Omega)$
for some $t>0$. This can be iterated due to the variational structure of 
equation \eqref{backward}.
Alternatively, we can   opt to utilize some technical results similar to
\cite[p.14, steps (a-c)]{jordan1998variational} and
\cite[Proposition 4.4]{forkert2022evolutionary} in which the initial data
even belongs to $L^1(\Omega)$. For simplicity, in this paper, 
we do not pursue this route, as we consider it beyond the scope of 
homogenization which is our key motivation.
\end{rem}

The final statement in this section gives the time continuity of $\rhoe$
in the standard Wasserstein space $(\sP(\Omega), W_2)$ \eqref{W2}.
\begin{lem}
Assume $E_\eps(\rho_0^\eps)<+\8$. For any $T>0$, let $\rhoe, t\in[0,T]$ be a solution to the $\eps$-gradient flow system \eqref{epsEDI}. 
Then there is $0 < C < \infty$ such that
\begin{equation}\label{equi-w2}
W_2^2(\rho^\eps_t, \rho^\eps_s) \leq C|t-s|, \quad \forall\,  0\leq s\leq t\leq T,
\end{equation}
where $W_2(\cdot, \cdot)$ is the standard $W_2$-distance.
Consequently, there exist  a subsequence 
$\rho^\eps$ and $\rho\in C([0,T]; \sP(\Omega))$ such that
\begin{equation}\label{convergence_W2}
W_2^2(\rhoe, \rho_t) \to 0, \quad \text{ uniformly in } t\in[0,T].
\end{equation} 
\end{lem}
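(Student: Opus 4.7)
The plan is to extract a uniform-in-$\eps$ action bound from the $\eps$-EDI, translate it into a Hölder-$1/2$ estimate in the $\eps$-Wasserstein distance via Benamou--Brenier, then pass to the standard $W_2$ using the uniform coercivity of $B_\eps$, and finally apply Arzelà--Ascoli.

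First, I would discard the non-negative cotangent-dissipation term from \eqref{epsEDI} to obtain
\[
\int_0^T \psi_\eps(\rho^\eps_\tau, \pt_\tau\rho^\eps_\tau)\,\ud\tau \leq E_\eps(\rho^\eps_0) - \inf_{\rho\in\sP(\Omega)} E_\eps(\rho).
\]
Since $\pie$ is bounded uniformly away from $0$ and $\infty$ and $\rho\log\rho$ is bounded below on $\sP(\Omega)$, the infimum is bounded below uniformly in $\eps$; combined with the well-preparedness \eqref{id_con}, this produces a constant $M < \infty$, independent of $\eps$, with $\int_0^T \psi_\eps(\rho^\eps_\tau, \pt_\tau\rho^\eps_\tau)\,\ud\tau \leq M$.

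Second, for $0 \leq s \leq t \leq T$, by the definition of $\psi_\eps$ in \eqref{psi} I can write $\pt_\tau \rho^\eps_\tau = -\nabla\cdot(\rho^\eps_\tau B_\eps^{-1}\nabla u^\eps_\tau)$ and set $v^\eps_\tau := B_\eps^{-1}\nabla u^\eps_\tau$, so that the pair $(\rho^\eps_\tau, v^\eps_\tau)_{\tau\in[s,t]}$ lies in $V(\rho^\eps_s, \rho^\eps_t)$ after rescaling $[s,t]$ to $[0,1]$. The Benamou--Brenier formula \eqref{WBB} together with Cauchy--Schwarz in $\tau$ yields
\[
W_\eps^2(\rho^\eps_s, \rho^\eps_t) \leq (t-s)\int_s^t \int_\Omega \rho^\eps_\tau \langle B_\eps v^\eps_\tau, v^\eps_\tau\rangle\,\ud x\,\ud\tau = 2(t-s)\int_s^t \psi_\eps(\rho^\eps_\tau, \pt_\tau\rho^\eps_\tau)\,\ud\tau \leq 2M(t-s).
\]
The regularity from Lemma \ref{lem_reg} ensures $u^\eps_\tau$ is well defined (via an elliptic problem with smooth coefficients), so the inequality is legitimate. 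Now the uniform lower bound $B_\eps \geq C_1 I$ means that for every $(\sigma_\tau, w_\tau)$ admissible between $\rho^\eps_s$ and $\rho^\eps_t$ one has $\int \sigma_\tau \langle B_\eps w_\tau, w_\tau\rangle \,\ud x \geq C_1 \int \sigma_\tau |w_\tau|^2 \,\ud x$; taking the infimum gives $W_2^2 \leq C_1^{-1} W_\eps^2$, so \eqref{equi-w2} holds with $C = 2M/C_1$.

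Finally, since $\Omega = \mathbb{T}^n$ is compact, the space $(\sP(\Omega), W_2)$ is itself compact (narrow convergence is metrized by $W_2$ on a compact base space). The family $\{t \mapsto \rho^\eps_t\}_\eps$ is therefore pointwise relatively compact, and the Hölder-$1/2$ bound \eqref{equi-w2} supplies uniform equicontinuity. A standard Arzelà--Ascoli argument for maps into a compact metric space then produces a subsequence converging uniformly in $t\in[0,T]$ to some $\rho \in C([0,T]; (\sP(\Omega), W_2))$, which is \eqref{convergence_W2}. I expect the only mildly delicate step to be the Benamou--Brenier inequality in the middle paragraph; given the regularity of $\rho^\eps$ already in hand, this is routine, and the rest is bookkeeping.
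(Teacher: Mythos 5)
Your proof is correct and follows essentially the same route as the paper: extract the $\eps$-uniform bound on the time-integrated tangent dissipation from the EDI, convert it into a Hölder-$1/2$ estimate via Benamou--Brenier, and then apply Arzelà--Ascoli using the compactness of $(\sP(\mathbb{T}^n),W_2)$. The only cosmetic difference is that you pass through $W_\eps$ and compare it to $W_2$ via the coercivity $B_\eps\geq C_1 I$ at the level of the Benamou--Brenier infimum, whereas the paper estimates $W_2^2(\rho^\eps_t,\rho^\eps_s)$ directly from the velocity $v^\eps_\tau=B_\eps^{-1}\nabla u^\eps_\tau$ and then bounds $|v^\eps_\tau|^2\lesssim\langle\nabla u^\eps_\tau,B_\eps^{-1}\nabla u^\eps_\tau\rangle$; you are also somewhat more explicit than the paper in justifying that the constant $M$ (hence $C$) is $\eps$-independent, which is indeed what the Arzelà--Ascoli step requires.
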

\begin{proof}
First, since $\rhoe, t\in[0,T]$ satisfies \eqref{epsEDI} and $E_\eps(\rho_0^\eps)<+\8$, we have for any  $0\leq s\leq t\leq T$, 
\begin{equation}
\int_s^t \psi_\eps( \rho^\eps_\tau, \pt_\tau \rho^\eps_\tau) \ud \tau <+\8.
\end{equation}
This means for the curve $\rhoe, t\in[0,T]$ with $\pt_t \rho^\eps_t = -\nabla \cdot \bbs{\rhoe \be \nabla u^\eps_t}$, we have
\begin{equation}
\int_s^t \int_\Omega\frac12 \la \nabla u^\eps_\tau, \be \nabla u^\eps_\tau  \ra \rho^\eps_\tau \ud x \ud \tau <+\8.
\end{equation}
For this curve, the velocity in the continuity equation is given by 
$v^\eps_t = \be \nabla u^\eps_t$. From \cite[Theorem 17.2]{book0}, we have
\begin{equation}
\begin{aligned}
W_2^2(\rho^\eps_t, \rho^\eps_s) \leq |t-s| \int_s^t \int_\Omega |v^\eps_\tau|^2 \rho^\eps_\tau \ud x \ud \tau =& |t-s| \int_s^t \int_\Omega |\be \nabla u^\eps_\tau|^2 \rho^\eps_\tau \ud x \ud \tau\\
\lesssim & |t-s|\int_s^t \int_\Omega \la  \nabla u^\eps_\tau, \be  \nabla u^\eps_\tau \ra \rho^\eps_\tau \ud x \ud \tau.
\end{aligned}
\end{equation}
This gives the equi-continuity of $\rhoe$ in $(\sP(\Omega), W_2)$. 

Second, for any $t$ fixed, as $\int_\Omega \rhoe \ud x =1$ and $\Omega$ is 
compact, by \cite[Theorem 8.8]{book0}, the weak* convergence of $\rhoe\in \sP$ 
to some $\rho_t\in \sP$ implies that 
\begin{equation}
W_2(\rho^\eps_t, \rho_t) \to 0.
\end{equation}
We then complete the proof by applying the Arzel\'a-Ascoli Theorem in 
$(\sP(\Omega), W_2)$. 
\end{proof}

\section{Passing limit in EDI formulation of $\eps$-gradient flow}
\label{epsEDI-0EDI}
In this section, we prove that the EDI formulation \eqref{epsEDI} of 
$\eps$-gradient flow \eqref{epsGF} converges to the limiting EDI
\eqref{0EDI}. To this end, we need to prove three lower bounds for the 
functionals \eqref{freeEeps}, \eqref{psi}, and \eqref{c_psi} on the 
left-hand-side of \eqref{epsEDI}.  Recall the definitions of $\bar{E}, \psi, \psi^*$ in Section \ref{main}. The  lower bounds estimates are stated in the following.

\begin{thm}\label{thm1}
Assume the initial data $\rho_0^\eps$ satisfies the assumptions
of Lemma \ref{init.ass}. 
Let further $\rho_0$ be the limit of $\rho_0^\eps$ in $(\sP(\Omega), W_2)$
and $\rho_0^\eps$ be well-prepared in the sense of \eqref{id_con}.
Then   
\begin{enumerate}[(i)]
\item there exists a subsequence $\rho^\eps$ and $\rho\in C([0,T]; L^2(\Omega))$ such that \eqref{convergence_W2} holds;
\item for a.e. $t\in[0,T]$, 
the lower bound for free energy holds
\begin{equation}\label{lower1}
\liminf_{\eps \to 0} E_\eps(\rhoe) \geq \lbar{E}(\rho_t);
\end{equation}
\item for any $t\in[0,T]$, the lower bound for the dissipation on the cotangent plane holds
\begin{equation}\label{lower2}
\liminf_{\eps \to 0} \int_0^t \psi^*_\eps\left(\rho^\eps_\tau, - \frac{\delta E_\eps}{\delta \rho}(\rho^\eps_\tau)\right) \ud \tau \geq  
\int_0^t \psi^*\left(\rho_\tau, - \frac{\delta\lbar{E}}{\delta \rho}(\rho_\tau)\right) \ud \tau;
\end{equation}
\item for any $t\in[0,T]$, the lower bound for the dissipation on the tangent plane holds
\begin{equation}\label{lower3}
\liminf_{\eps \to 0} \int_0^t \psi_\eps(\rho^\eps_\tau, \pt_\tau \rho^\eps_\tau) \ud \tau \geq  \int_0^t \psi(\rho_\tau, \pt_\tau \rho_\tau) \ud \tau.
\end{equation}
\end{enumerate}
\end{thm}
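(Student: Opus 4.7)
The plan is to verify (i)--(iv) in that order: (i) and (ii) are essentially direct from the a priori estimates, (iii) follows from a classical $\Gamma$-convergence of a weighted Dirichlet form after a Fisher-information rewriting, and (iv) is the main obstacle, extracted from (iii) by Legendre duality. For (i), Lemma~\ref{init.ass} supplies, along a subsequence, $f^\eps \to f$ in $L^2(0,T;L^2(\Omega))$ and hence $\rho^\eps = f^\eps\pie \wra f\lbar\pi =: \rho$; together with the Wasserstein equicontinuity \eqref{equi-w2} and Arzel\`a--Ascoli on $(\sP(\Omega),W_2)$ (yielding \eqref{convergence_W2}), uniqueness of limits gives $\rho\in C([0,T];L^2(\Omega))$. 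For (ii), rewrite $E_\eps(\rhoe) = \int_\Omega f^\eps_t\log f^\eps_t\,\pie\,\ud x$. Lemma~\ref{lem_reg}(1) keeps $f^\eps$ uniformly bounded above and away from $0$, so $f^\eps_t\log f^\eps_t$ converges strongly in every $L^p(\Omega)$ for a.e.~$t$, while $\pie\wra\lbar\pi$; the strong$\times$weak pairing yields
$$\lim_{\eps\to 0} E_\eps(\rhoe) = \int_\Omega f_t\log f_t\,\lbar\pi\,\ud x = \int_\Omega\rho_t\log\frac{\rho_t}{\lbar\pi}\,\ud x = \lbar E(\rho_t),$$
which is stronger than \eqref{lower1}.

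For (iii), I use the Fisher-information reformulation with $g^\eps := \sqrt{f^\eps}$: a direct computation using $\delta E_\eps/\delta\rho = \log f^\eps + 1$ and $\nabla f^\eps = 2g^\eps\nabla g^\eps$ gives
$$\psi^*_\eps\Big(\rho^\eps_\tau,\,-\tfrac{\delta E_\eps}{\delta\rho}(\rho^\eps_\tau)\Big) \;=\; 2\int_\Omega\la\nabla g^\eps_\tau,\be\nabla g^\eps_\tau\ra\pie\,\ud x.$$
Since $f^\eps$ is uniformly bounded above and below, $g^\eps\to g = \sqrt{f}$ strongly in $L^2$. The right-hand side is a quadratic Dirichlet form with $1$-periodic, uniformly elliptic integrand and strictly positive weight $B^{-1}(y)\pi(x,y)$; by classical periodic homogenization of such forms, its $\Gamma$-liminf under strong $L^2$-convergence is $2\int_\Omega\la\nabla g,\bb\nabla g\ra\lbar\pi\,\ud x$, with the effective matrix characterized by the standard corrector cell problem. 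Applying this bound pointwise in $\tau$ and invoking Fatou's lemma in time delivers \eqref{lower2}.

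For (iv), I bootstrap by Legendre duality. Fenchel--Young \eqref{FenchelYoung} gives, for every smooth test family $\xi^\eps$,
$$\int_0^t\psi_\eps(\rho^\eps_\tau,\pt_\tau\rho^\eps_\tau)\,\ud\tau \;\ge\; \int_0^t\!\!\int_\Omega \xi^\eps_\tau\,\pt_\tau\rho^\eps_\tau\,\ud x\,\ud\tau \;-\; \int_0^t\psi^*_\eps(\rho^\eps_\tau,\xi^\eps_\tau)\,\ud\tau.$$
For fixed $\xi\in C^1([0,T];C^\infty(\Omega))$, I take the two-scale recovery sequence $\xi^\eps(x,\tau) = \xi(x,\tau) + \eps\chi(x/\eps)\cdot\nabla_x\xi(x,\tau)$, where $\chi$ is the corrector from the cell problem associated to $\bb$. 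Then $\xi^\eps\to\xi$ strongly in $L^2$, so by \eqref{conv5} the linear term converges to $\int_0^t\int_\Omega\xi\,\pt_\tau\rho\,\ud x\,\ud\tau$; the two-scale structure of $\nabla\xi^\eps$, combined with the strong $L^2$-convergence of $\rho^\eps$ from Lemma~\ref{init.ass}, drives $\psi^*_\eps(\rho^\eps_\tau,\xi^\eps_\tau)\to\psi^*(\rho_\tau,\xi_\tau)$ pointwise in $\tau$, and the time integrals then pass by uniform bounds. Taking the supremum over a countable dense family of smooth $\xi$'s and invoking the Fenchel--Young duality between $\psi$ and $\psi^*$ for the limiting functionals recovers \eqref{lower3}.

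The principal difficulty is precisely this last recovery step for $\psi^*_\eps$ at the oscillatory test field $\xi^\eps$: since $\nabla\xi^\eps$ converges only weakly, $\be$ oscillates, and the product is integrated against the also non-trivially varying $\rho^\eps$, one cannot read off the limit by standard weak--strong pairing. The cell-problem choice of $\chi$ is engineered so that $\be\nabla\xi^\eps$ two-scale converges to $\bb\nabla_x\xi$, and it is then the strong $L^2$-compactness of $\rho^\eps$ from Lemma~\ref{init.ass}, together with a careful two-scale argument paralleling \cite{forkert2022evolutionary}, that legitimizes passage to the limit in the resulting triple product.
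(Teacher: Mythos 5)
Your overall strategy matches the paper's: (i)--(ii) follow from the a priori estimates and the strong convergence of $f^\eps$ exactly as in the paper; the cotangent bound \eqref{lower2} is obtained via the Fisher-information rewriting in $\sqrt{f^\eps}$ and the classical $\Gamma$-liminf for the periodic weighted Dirichlet form with matrix $B^{-1}(y)\pi(x,y)$; and the tangent bound \eqref{lower3} is extracted by Legendre duality together with an upper-bound (recovery) construction for $\psi^*_\eps$. Where you genuinely differ is in the handling of time dependence and in how the recovery sequence is produced. For \eqref{lower2} you apply the static $\Gamma$-liminf at a.e.\ $\tau$ (legitimate thanks to \eqref{ae.conv}) and conclude with Fatou, whereas the paper proves the static inequality and then invokes Stefanelli's lower-semicontinuity result for time integrals of convex normal integrands; your route is more elementary and works because $\psi^*_\eps\ge 0$. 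For \eqref{lower3} you test Fenchel--Young \eqref{FenchelYoung} with explicit first-order corrector fields $\xi+\eps\chi(x/\eps)\cdot\nabla_x\xi$ for arbitrary smooth time-dependent $\xi$ and take a supremum at the limit level; the paper instead fixes the optimal limiting potential $\tilde\xi$ for $\psi(\rho,s)$, uses the abstract $\Gamma$-convergence recovery sequence to prove the static inequality \eqref{psi.lsc0}, and again appeals to Stefanelli's result with $s^\eps=\pt_t\rho^\eps\wra\pt_t\rho$ from \eqref{conv5}. Both implementations hinge on the same delicate limit of $\int\la\nabla\xi^\eps,\be\nabla\xi^\eps\ra\rho^\eps\ud x$ with oscillating gradients paired against the varying weight $\rho^\eps$.

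Two steps in your treatment of \eqref{lower3} are asserted rather than proved and need to be written out. First, since $B$ is only bounded measurable, $\nabla_y\chi$ lies in $L^2(\mathbb{T}^n)$ but not $L^\infty$, so $\la\nabla\xi^\eps,\be\nabla\xi^\eps\ra$ is merely bounded in $L^1$ and oscillates; its pairing with $\rho^\eps$ (uniformly bounded and strongly $L^2$-convergent for a.e.\ $\tau$) cannot be concluded by weak--strong pairing alone and requires the uniform-integrability-plus-Egorov argument that the paper spells out -- your appeal to ``two-scale structure plus strong compactness'' names the right ingredients but this is precisely where the work sits. Second, after passing to the limit for each fixed smooth $\xi$, you must identify $\sup_\xi\int_0^t\big[\la\xi_\tau,\pt_\tau\rho_\tau\ra-\psi^*(\rho_\tau,\xi_\tau)\big]\ud\tau$ with $\int_0^t\psi(\rho_\tau,\pt_\tau\rho_\tau)\ud\tau$; this interchange of supremum and time integral (equivalently, the density of smooth time-dependent test fields among admissible potentials) is not automatic and needs a measurable-selection or approximation argument, which is available here because $\rho$ is bounded above and away from zero, so the optimal $u_\tau$ solving $-\nabla\cdot\big(\rho_\tau\lbar{B}^{-1}\nabla u_\tau\big)=\pt_\tau\rho_\tau$ is in $H^1(\Omega)$ with uniform bounds. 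The paper's decomposition (static liminf first, then the abstract time-integrated lower-semicontinuity theorem) avoids this interchange entirely. Neither point is fatal, but as stated they are gaps in the argument rather than routine omissions.
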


As mentioned before, our approach relies on the idea of
convergence of functionals in a variational setting. In particular, we make 
use of the following result which is a special case of by now classical 
results of $\Gamma$-convergence.
See for example, \cite[Theorems 4.1, 4.4]{marcellini1978periodic}, and also
\cite{braides2006handbook, braides2002gamma, dal2012introduction} for more detailed explanations.
\begin{thm}[$\Gamma$-conv]\label{GammaMainThm}
Let $\Omega$ be an open bounded domain of $\mathbb R^n$ and 
$A_\eps(\cdot) = A(\cdot,\frac{\cdot}{\eps})$ be a symmetric positive definite
matrix. Consider the functional
\begin{equation}
{\mathcal F}_\eps(v) =\int_\Omega
\left\la A\left(x,\frac{x}{\eps}\right)\nabla v, \nabla v\right\ra\ud x,
\quad v\in H^1_0(\Omega) + w
\end{equation}
where $w\in H^1(\Omega)$ is given. Then ${\mathcal F}_\eps$ $\Gamma$-converges 
in $L^2(\Omega)$ to the following functional
\begin{equation}
{\mathcal F}(v) =\int_\Omega
\left\la \lbar{A}(x)\nabla v, \nabla v\right\ra\ud x,
\quad v\in H^1_0(\Omega) + w.
\end{equation}
In detail, 
\begin{enumerate}
\item for any $v_\eps\in H^1_0(\Omega) + w$ that converges to 
$v\in H^1_0(\Omega) + w$ in $L^2(\Omega)$, it holds that
\begin{equation}
\liminf_{\eps\to0}{\mathcal F}_\eps(v_\eps) \geq {\mathcal F}(v);
\end{equation}
\item for any $v\in H^1_0(\Omega) + w$, there exists 
$v_\eps\in H^1_0(\Omega) + w$ that converges to $v$ in $L^2(\Omega)$, such 
that 
\begin{equation}
\lim_{\eps\to0}{\mathcal F}_\eps(v_\eps) = {\mathcal F}(v).
\end{equation}
\end{enumerate}
Furthermore, the effective matrix $\lbar{A}$ can be found by the following
variational formula: for any $p\in\mathbb R^n$,
\begin{equation}\label{eff.mat}
\big\la \lbar{A}(x)p, p\big\ra 
= \inf\left\{\int_{\mathbb T^n} 
\left\la A\left(x,y\right)(\nabla v+p),\, (\nabla v + p)\right\ra\ud y,
\quad v\in H^1(\mathbb T^n)\right\}.
\end{equation}
\end{thm}
As an application, we will apply the above result to the case 
$\Omega = \mathbb T^n$ and 
\[
A(x,y) = D(x,y)\,\,(= \pi(x,y)B^{-1}(y))
\quad\text{(see \eqref{D.def}).}
\]
The resultant formula for $\lbar{A}(x)$ is given by $\lbar{D}+\lbar{G}$; 
see the expressions of $\lbar{D}$ and $\lbar{G}$ in \eqref{homog.f}. In Appendix \ref{asym.exp}, we derive the same formula using asymptotic analysis.

\begin{proof}[Proof of \eqref{lower1}] This statement follows directly
from \cite[Lemma 9.4.3]{AGS} which says that the entropy
functional is jointly lower-semicontinuous with respect to the weak
convergence of $\rhoe$ and $\pie$. In our case, it also follows simply
from the strong convergence of $f_t^\eps$ (together with the fact that
$f_t^\eps$ is uniformly bounded from above and away from zero):
\begin{eqnarray*}
\lim_{\eps\to 0}
\int_\Omega \rhoe\log\frac{\rhoe}{\pie}\,\ud x
= 
\lim_{\eps\to 0}
\int_\Omega f_t^\eps(\log f_t^\eps)\pie\,\ud x
= \int_\Omega f_t(\log f_t)\lbar{\pi}\,\ud x
= \int_\Omega \rho_t\log\frac{\rho_t}{\lbar{\pi}}\,\ud x.
\end{eqnarray*}
\end{proof}

\begin{proof}[Proof of \eqref{lower2} (time independence case)]
Let $\tau\in[0,T]$ be fixed. We will prove that
\begin{equation}\label{psi*.lsc}
\liminf_{\eps\to0}\psi^*_\eps(\rho^\eps_\tau, -\log \frac{\rho^\eps_\tau}{\pie})
\geq
\psi^*\left(\rho_\tau, -\log \frac{\rho_\tau}{\lbar{\pi}}\right).
\end{equation}
We re-write the functional $\psi^*$ in the following way,
\begin{align*}
\psi^*_\eps(\rho^\eps_\tau, -\log \frac{\rho^\eps_\tau}{\pie}) =& 
\frac12 \int_\Omega \left\la \nabla  \log \frac{\rho^\eps_\tau}{\pie}, \be \nabla \log \frac{\rho^\eps_\tau}{\pie} \right\ra \rho^\eps_\tau \ud x\\
=& 2 \int_\Omega \left\la \,  \nabla \sqrt{\frac{\rho^\eps_\tau}{\pie}}, \be\pie \nabla \sqrt{\frac{\rho^\eps_\tau}{\pie}}  \, \right\ra\ud x\\
=& 2 \int_\Omega \big\la \, \nabla w^\eps_\tau, D_\eps\nabla w^\eps_\tau 
\, \big\ra\ud x,
\end{align*}
where 
\[
w^\eps_\tau:= \sqrt{f^\eps_\tau},\,\,\,\text{and}\,\,\,
D_\eps = B^{-1}_\eps\pie.
\]
As $f_\tau^\eps\to f_\tau=\frac{\rho_\tau}{\lbar{\pi}}$ strongly
in $L^p(\Omega)$ for any $p\geq 1$, 
we have $w^\eps_\tau \to w_\tau := \sqrt{f_\tau}
=\sqrt{\frac{\rho_\tau}{\lbar{\pi}}}$ in $L^2(\Omega)$.
Now we can invoke Theorem \ref{GammaMainThm} to deduce that
\begin{eqnarray*}
&&\liminf_{\eps\to 0}2\int_\Omega
\big\la \,  \nabla w_\tau^\eps, D_\eps \nabla w_\tau^\eps
  \, \big\ra \ud x\\
&\geq& 
2\int_\Omega \la \nabla w_\tau, (\lbar{D}+\lbar{G})\nabla w_\tau \ra \ud x
=2\int_\Omega \left\la \nabla \sqrt{f_\tau}, (\lbar{D}+\lbar{G})\nabla \sqrt{f_\tau} \right\ra \ud x
\\
&=& 
2\int_\Omega \left\langle \nabla \sqrt{\frac{\rho_\tau}{\lbar{\pi}}}, (\lbar{D}+\lbar{G}) \nabla \sqrt{\frac{\rho_\tau}{\lbar{\pi}}}\right\rangle \ud x
=\frac12\int_\Omega \left\langle \nabla \log{\frac{\rho_\tau}{\lbar{\pi}}}, 
\left(\frac{\lbar{D}+\lbar{G}}{\lbar{\pi}}\right) \nabla \log{\frac{\rho_\tau}{\lbar{\pi}}}\right\rangle\rho_\tau \ud x
\\
&=&\frac12\int_\Omega \left\langle \nabla \log{\frac{\rho_\tau}{\lbar{\pi}}}, 
\lbar{B}^{-1}\nabla \log{\frac{\rho_\tau}{\lbar{\pi}}}\right\rangle\rho_\tau \ud x
= \psi^*\left(\rho_\tau, -\log \frac{\rho_\tau}{\lbar{\pi}}\right),
\end{eqnarray*}
concluding the result \eqref{psi*.lsc}, with the identification
$\lbar{B}=\left(\frac{\lbar{D}+\lbar{G}}{\lbar{\pi}}\right)^{-1}$,
from \eqref{homog.B}.

\end{proof}

\begin{proof}[Proof of \eqref{lower3} (time independence case)]
Here we establish 
\begin{equation}\label{psi.lsc0}
\liminf_{\eps\to0}\psi_\eps(\rho^\eps, s^\eps)
\geq
\psi(\rho, s)
\end{equation}
for any $\rho^\eps\wra\rho$ in $L^1(\Omega)$ and $s^\eps\wra s$ in 
$L^2(\Omega)$ with the property that
\[
f^\eps = \frac{\rho^\eps}{\pie} \longrightarrow 
f = \frac{\rho}{\lbar{\pi}}\,\,\,\text{in}\,\,\,L^2(\Omega).
\]

Using the definition of $\psi_\eps$, we have 
\begin{equation}
\begin{aligned}
\psi_\eps(\rho^\eps,s^\eps) 
=&\sup_{\xi\in L^2(\Omega)}\left\{
\int_\Omega\xi s^\eps\ud x 
- \frac12\int_\Omega\la\nabla\xi,B_\eps^{-1}\nabla\xi \ra\rho^\eps\ud x
\right\}
\end{aligned}
\end{equation}
and likewise,
\begin{equation}\label{psi.sup.def}
\begin{aligned}
\psi(\rho,s) 
=&\sup_{\xi\in L^2(\Omega)}\left\{
\int_\Omega\xi s\ud x 
- \frac12\int_\Omega\la\nabla\xi,\bar{B}^{-1}\nabla\xi \ra\rho\ud x
\right\}.
\end{aligned}
\end{equation}
Note that the supremum in both definitions can be attained. In particular, 
there is a $\tilde\xi$ such that
\begin{equation}
\begin{aligned}
\psi(\rho,s)
=&
\int_\Omega\tilde\xi s\ud x
- \frac12\int_\Omega\la\nabla\tilde\xi,\bar{B}^{-1}\nabla\tilde\xi \ra\rho\ud x
\,\,\,\text{where}\,\,\,
s = -\nabla\cdot\left(\rho\bar{B}^{-1}\nabla\tilde\xi\right).
\end{aligned}
\end{equation}

Next we make use of an approximating sequence $\tilde\xi^\eps\wra\tilde\xi$ in $H^1(\Omega)$ 
(and hence $\tilde\xi^\eps\to\tilde\xi$ in $L^2(\Omega)$) such that
\begin{equation}
\lim_{\eps\to0}
\frac12\int_\Omega\la\nabla\tilde\xi^\eps,B_\eps^{-1}\nabla\tilde\xi^\eps \ra\rho_\eps\ud x
= 
\frac12\int_\Omega\la\nabla\tilde\xi,\lbar{B}^{-1}\nabla\tilde\xi \ra \rho\ud x.
\end{equation}
The above is equivalent to 
\begin{equation}\label{approx.seq.psi}
\lim_{\eps\to0}
\frac12\int_\Omega\la\nabla\tilde\xi^\eps, D_\eps\nabla\tilde\xi^\eps \ra f^\eps\ud x
=\frac12\int_\Omega\la\nabla\tilde\xi,(\lbar{D}+\lbar{G})\nabla\tilde\xi \ra f\ud x
.
\end{equation}
The construction of $\tilde{\xi}^\eps$ can essentially be given by 
Theorem \ref{GammaMainThm} if we set $A_\eps = D_\eps f^\eps$. But in order to separate
the dependence between $D_\eps$ and $f^\eps$, a different argument is needed. We will provide the details in Appendix \ref{GammaInhomog}.

Now by the fact that 
$\tilde\xi^\eps\longrightarrow\tilde\xi$ in $L^2(\Omega)$, 
together with the assumption $s^\eps\wra s$ in $L^2(\Omega)$, we have
\[
\int_\Omega\tilde\xi^\eps s^\eps\ud x
\longrightarrow
\int_\Omega\tilde\xi s\ud x.
\]
Then \eqref{approx.seq.psi} implies that
\begin{equation}\label{psi.lsc}
\begin{aligned}
\psi(\rho,s)
=&
\int_\Omega\tilde\xi s\ud x
- \frac12\int_\Omega\la\nabla\tilde\xi,\lbar{B}^{-1}\nabla\tilde\xi \ra\rho\ud x
\\
=&
\lim_{\eps\to0}
\left\{\int_\Omega\tilde\xi^\eps s^\eps\ud x
- \frac12\int_\Omega\la\nabla\tilde\xi^\eps,{B}_\eps^{-1}\nabla\tilde\xi^\eps 
\ra\rho^\eps\ud x
\right\}\\
\leq & \liminf_{\eps\to0}\left[
\sup_{\xi}\left\{\int_\Omega\xi s^\eps\ud x
- \frac12\int_\Omega\la\nabla\xi,{B}_\eps^{-1}\nabla\xi \ra\rho^\eps\ud x
\right\}\right]
\\
\leq &\liminf_{\eps\to0} \psi(\rho^\eps, s^\eps),
\,\,\,%\text{(by \eqref{psi.sup.def})}
\end{aligned}
\end{equation}
which completes the proof for \eqref{psi.lsc}.
\end{proof}

\begin{proof}[Proof of \eqref{lower2} and \eqref{lower3}: time dependent case]
To extend the time independent case to the time dependent case and finish the 
proofs of lower bounds \eqref{lower2} and \eqref{lower3}, we will make use of a
general $\Gamma$-$\liminf$ result as stated in \cite[Cor. 4.4]{stefanelli2008brezis}.
Specifically, let $H$ be a separable and reflexive Banach space, and
$g_n$, $g_\infty: (0,T)\times H\longrightarrow(-\infty, \infty]$ be 
such that $g_n(t,\cdot)$ and $g_\infty(t,\cdot): H\longrightarrow(-\infty, \infty]$ 
are convex and for all $u\in H$ and a.e. $t\in(0,T)$, the following holds:
\begin{equation}\label{wk.normal.int}
g_\infty(t,u) 
\leq 
\inf\left\{
\liminf_{n} g_n(t,u_n): u_n\wra u\,\,\,\text{in $H$}
\right\}.
\end{equation}
Then for $p\in[1, \infty]$, $u_n\wra u$ in $L^p(0,T;H)$ 
(weak-$*$ if $p=\infty$)
and $t\longrightarrow \max\{0, -g_n(t,u_n(t)\}$ uniformly integrable, 
we have,
\begin{equation}
\int_0^T g_\infty(t,u(t))\ud t
\leq 
\liminf_n \int_0^T g_n(t,u_n(t))\ud t.
\end{equation}
Note that the uniform integrability condition is automatically satisfied if
$g_n$ are non-negative, or bounded from below. See also the remark after 
Cor. 4.4 in \cite{stefanelli2008brezis}.

For \eqref{lower2}, we set $H=H^1(\Omega)$,
\[
g^*_\eps(t,w) := 2\int_\Omega\la\nabla w, D_\eps\nabla w\ra\ud x,
\,\,\,\text{and}\,\,\,
g^*_\infty(t,w) := 2\int_\Omega\la\nabla w, (\lbar{D}+\lbar{G})\nabla w\ra\ud x.
\]
Then $g^*_\eps(t,\cdot)$ and $g^*_\infty(t,\cdot)$ are convex and
\eqref{wk.normal.int} holds true by the time independent version of
\eqref{lower2}. Hence we have
\[
\int_0^T 2\int_\Omega\la\nabla w, (\lbar{D}+\lbar{G})\nabla w\ra\ud x\ud t
\leq\liminf_\eps
\int_0^T 2\int_\Omega\la\nabla w^\eps(t), D_\eps\nabla w^\eps(t)\ra\ud x\ud t
\]
provided $w^\eps\wra w$ in $L^2((0,T);H)$. This last condition is satisfied
by the identification $w^\eps(t) = \sqrt{\frac{\rho^\eps(t)}{\pie}}$ and 
$w(t) = \sqrt{\frac{\rho(t)}{\lbar{\pi}}}$, and \eqref{f.st.L2L2}.
This concludes the lower bound \eqref{lower2}.

For \eqref{lower3}, we set $H = L^2(\Omega)$,
\[
g_\eps(t,s) := \psi_\eps(\rho(t),s) = 
\frac12\int_\Omega\la\nabla u^\eps, B_\eps^{-1}\nabla u^\eps\ra\rho^\eps(t)\ud x
\,\,\,\text{with}\,\,\,
-\nabla\cdot (\rho^\eps B_\eps^{-1}\nabla u^\eps) = s,
\]
and
\[
g_\infty(t,s) := \psi(\rho(t),s) = 
\frac12\int_\Omega\la\nabla u, \lbar{B}^{-1}\nabla u\ra\rho(t)\ud x
\,\,\,\text{with}\,\,\,
-\nabla\cdot (\rho \lbar{B}^{-1}\nabla u) = s.
\]
Again, $g_\eps(t,\cdot)$ and $g_\infty(t,\cdot)$ are convex 
because the map $s\to u^\eps$ or $u$ is uniquely defined and linear.
By \eqref{psi.lsc0}, \eqref{wk.normal.int} is satisfied. Hence, we have
\[
\int_0^T\psi\big(\rho(t),s(t)\big)\ud t 
\leq
\liminf\int_0^T\psi_\eps\big(\rho^\eps(t), s^\eps(t)\big)\ud t
\]
upon the identification 
$s^\eps(t) = \partial_t\rho^\eps_t$ and 
$s(t) = \partial_t\rho_t$. The fact that $s^\eps\wra s$ in $L^2((0,T);H)$ follows
from \eqref{conv5}. Lower bound \eqref{lower3} is thus proved.

The above conclude the proof for Theorem \ref{thm1}.
\end{proof}

\section{Comparison between limiting Wasserstein distances}\label{sec5}

In this section, we use the just established convergence result for gradient 
flows in EDI form to  further analyze the induced limiting Wasserstein distance 
$\lbar{W}$. In particular, 
we will show that the limiting Wasserstein metric $\lbar{W}$ is in
general, different, 
and in fact {\em strictly larger than} $W_{\text{GH}}$
obtained from the Gromov-Hausdorff limit of $W_\eps$ which is a commonly
considered mode of convergence of metric spaces. 
 Gromov-Hausdorff distance can be used to compare the distortion of two 
metric spaces from being isometric. 
The particular property needed in this
paper is that the Gromov-Hausdorff convergence of a compact metric space $\Omega_k$ implies the Gromov-Hausdorff convergence of the Wasserstein space  $(\sP(\Omega_k), W_k)$ \cite[Theorem 28.6]{villani2009optimal}.    
Briefly stated, let $({\mathcal X}, d_{\mathcal X})$ and 
$({\mathcal Y}, d_{\mathcal Y})$ be two metric spaces.
Their Gromov-Hausdorff distance is defined as 
\cite[(27.2)]{villani2009optimal}
\begin{equation}\label{GHDist}
D_{GH}({\mathcal X}, {\mathcal Y}) = \frac12\inf_{\mathcal R}
\sup_{(x,y),(x',y')\in {\mathcal R}}\Big|
d_{\mathcal X}(x,x')-
d_{\mathcal Y}(y,y')
\Big|,
\end{equation}
where ${\mathcal R}\subset {\mathcal X}\times{\mathcal Y}$ is a 
correspondence or relation between
${\mathcal X}$ and ${\mathcal Y}$.
We refer to 
\cite[Chapters 27, 28]{villani2009optimal} for more detailed information about 
the concept of Gromov-Hausdorff distances and convergence. 
For our application, we will take
$({\mathcal X}, d_{\mathcal X}) := (\Omega, d_\eps)$ or 
$(\sP(\Omega), W_\eps)$.

We remark that several of the following statements require the
existence of densities (with respect to Lebesgue measure) for the 
underlying probability measures and the space to be geodesic complete. 
These are automatically satisfied by our standing assumptions (see Section \ref{main}).

\subsection{Effective Wasserstein distance $\lbar{W}$ induced by convergence 
of gradient flows}\label{WepsWbar}
For convenience, we recall here the Kantorovich and Benamou-Brenier 
formulations \eqref{Wc} and \eqref{WBB} for our $\eps$-Wasserstein metric 
$W_\eps$:
\begin{equation}\label{W_eps_5}
W_\eps^2(\rho_0, \rho_1):=\inf\left\{
\iint d^2_\eps(x,y) \ud \gamma(x,y); \quad 
\int_{\Omega} \gamma( x,\ud y)   = \rho_0(x)\ud x, \,\, 
\int_{\Omega} \gamma(\ud x,  y)   = \rho_1(y)\ud y
\right\}
\end{equation}
and 
\begin{equation}\label{WBB-2}
W_\eps^2(\rho_0, \rho_1) := \inf\left\{
\int_0^1\int \rho_t(x)\langle B_\eps(x) v_t(x), v_t(x)\rangle \ud x\ud t,\quad
(\rho_t, v_t)\in V(\rho_0, \rho_1)
\right\},
\end{equation}
where $V$ is defined in \eqref{ContEqn}. The $\eps$-metric $d_\eps$
on $\Omega\subset \bR^n$ is given via the least action  
\begin{equation}\label{eps-d}
d_\eps^2(x,y) := \inf\left\{ \int_0^1 \la B_\eps(z_t) \dot{z}_t, \dot{z}_t \ra \ud t, \quad z_0 = x, \quad z_1=y \right\}.
\end{equation}
A curve $z(\cdot)\in AC([0,1]; \bR^n)$ that achieves the infimum in 
\eqref{eps-d} is a geodesic in the metric space $(\bR^n, d_\eps).$ 
From \cite[Theorem A,B]{bernard2007optimal}, \eqref{W_eps_5} and \eqref{WBB-2}
are equivalent. 

The same formulations hold for our induced limit Wasserstein
distance $\lbar{W}$. More precisely, we have
\begin{equation}\label{W_*}
\lbar{W}^2(\rho_0, \rho_1):= \inf\left\{
\int \int \lbar{d}^2(x,y) \ud \gamma(x,y); \quad 
\int_{\Omega} \gamma( x,\ud y)   = \rho_0(x)\ud x, \,\, 
\int_{\Omega} \gamma(\ud x, y)   = \rho_1(y)\ud y
\right\},
\end{equation}
and the equivalent formulation 
\begin{equation}
\lbar{W}^2(\rho_0, \rho_1):= \inf\left\{
\int_0^1\int \rho_t(x)\langle \lbar{B}  v_t(x), v_t(x)\rangle \ud x\ud t,\quad
(\rho_t, v_t)\in V(\rho_0, \rho_1)
\right\}.
\end{equation}
Here the constant matrix $\lbar{B}$ is defined in  \eqref{homog.B}
and the induced-metric $\lbar{d}$ on $\Omega\subset \bR^n$ is again
given via the least action   
\begin{equation}\label{d_*}
\lbar{d}^2(x,y) := \inf\left\{ \int_0^1 \la \lbar{B} \dot{z}_t, \dot{z}_t \ra \ud t, \quad z_0 = x, \quad z_1=y \right\}.
\end{equation}
From the Euler-Lagrangian equation for the minimizer of \eqref{d_*}, the 
optimal curve $\tilde{z}(\cdot)$ that achieves the least action satisfies
$\lbar{B}\ddot{\tilde{z}}_t=0$, and hence it has   constant speed,
$\dot{\tilde{z}}_t = y-x$. Thus we have explicitly
\begin{equation} 
\lbar{d}^2(x,y) = \la \lbar{B} (y-x), y-x \ra = \la \lbar{B} \hat{n}, \hat{n} \ra |y-x|^2,
\quad\text{where}\quad
\hat{n}=\frac{y-x}{|y-x|}.
\end{equation}

Note that both $W^\eps$ and $\lbar{W}$ induce a Riemannian metric on 
$\sP(\Omega)$. More precisely, for any $\rho\in\sP(\Omega)$, and any
$s_1, s_2\in T_{\sP}$, the tangent plane at $\rho$,
the first fundamental form are defined respectively as
\begin{eqnarray}
\big\la s_1, s_2\big\ra_{T_\sP, T_\sP, \eps}&:=& 
\int  \rho(x)\langle B^{-1}_\eps(x) \nabla u_1(x), \nabla u_2(x)\rangle \ud x,
\end{eqnarray}
where
$s_i = -\nabla \cdot (\rho B^{-1}_\eps\nabla u_i)$, $i=1,2$
for $W^\eps$, and
\begin{eqnarray}
\big\la s_1, s_2\big\ra_{T_\sP, T_\sP}&:=& \int  \rho(x)\langle \lbar{B}^{-1}(x) \nabla u_1(x), \nabla u_2(x)\rangle \ud x, 
\label{limitRe}
\end{eqnarray}
where
$s_i = -\nabla \cdot (\rho \lbar{B}^{-1}\nabla u_i)$, $i=1,2$
for $\lbar{W}$.
This is also manifested by the fact that both the corresponding
dissipation functionals are {\em bilinear forms} in $s$:
\[
\psi_\eps(\rho, s)= \frac12 \int_\Omega \la \nabla u, B_\eps^{-1} \nabla u \ra \rho \ud x  \quad \text{ with }\, s=-\nabla \cdot \bbs{\rho B_\eps^{-1} \nabla u},
\]
and
\[
\psi(\rho, s)= \frac12 \int_\Omega \la \nabla u, \lbar{B}^{-1} \nabla u \ra \rho \ud x  \quad \text{ with }\, s=-\nabla \cdot \bbs{\rho \lbar{B}^{-1} \nabla u}.
\]

\subsection{The Gromov-Hausdorff limit $W_{\text{GH}}$ of $W_\eps$}
\label{sec:GH}
Now we consider the convergence in the Gromov-Hausdorff sense
of $W_\eps$ to a limiting Wasserstein metric, denoted as $W_{\text{GH}}$.

We first show that even in one dimension, in general it is always the
case that $W_{\text{GH}}< \lbar{W}$ unless $\pi_\eps$ and $B_\eps$ are
related to each other in some specific way.
Recall the metric $d_\eps$ in \eqref{eps-d}. From the Euler-Lagrangian equation for the minimizer $z_t = \tilde{z}^\eps_t$, we have
$$
\frac{\ud}{\ud t} (2 B_\eps(z_t)\dot{z}_t) 
= B_\eps'(z_t)(\dot{z}_t)^2,
$$
leading to $B_\eps'(z_t)\dot{z}_t^2+ 2 B_\eps(z_t)\ddot{z}_t=0$ and thus 
$$
B_\eps(z) \dot{z}^2=C_\eps(x,y),
\quad\text{for some constant $C_\eps(x,y)$.}
$$
Upon solving this ODE for $z_t$ with the two boundary conditions 
$z(0)=x$, $z(1)=y$, we have
$$
\sqrt{C_\eps(x,y)}=\int_x^y \sqrt{B_\eps(z)} \ud z.
$$
Hence the infimum in \eqref{eps-d} is given by
\begin{equation}
d^2_\eps(x,y) = C_\eps(x,y) = \left(\int_x^y \sqrt{B_\eps(z)} \ud z\right)^2.
\end{equation} 
As $B_\eps(x)=B(\frac{x}{\eps})$,
it is easy to verify that for any $x,y\in \Omega$,  there exist  some integer $N_\eps$ and $\delta\in(-1,1)$, such that $y-x=N_\eps\eps + \delta \eps$ and $N_\eps \eps \to |x-y|$.  Notice also  $B(\cdot)$ is $1$-periodic. Hence
\begin{eqnarray*}
d^2_\eps(x,y) = \bbs{ \eps \int_{\frac{x}{\eps}}^{\frac{y}{\eps}} \sqrt{B(s)} \ud s}^2 &=& \left(\eps N_\eps \int_0^1 \sqrt{B(s)} \ud s + \eps \int_0^\delta \sqrt{B(s)} \ud s\right)^2\\
&\stackrel{\lra}{\eps\to0} & 
|x-y|^2\left(\int_0^1 \sqrt{B(s)} \ud s\right)^2
=: d^2_{\text{GH}}(x,y).
\end{eqnarray*}
Notice that if one chooses ${\mathcal R}$ to be the identity map as
the correspondence between the metric spaces 
$\mathcal{X}:=(\Omega, d_\eps)$ and $\mathcal{Y}:= (\Omega, d_{\text{GH}})$, 
then from \eqref{GHDist}, we have
$$D_{\text{GH}}(\mathcal{X},\mathcal{Y})\leq \frac{1}{2} \sup_{(x,x), (y,y)\in \mathcal{X}\times\mathcal{Y}}|d_\eps(x,y)-d_{\text{GH}}(x,y)| \to 0.$$
Hence the one dimensional metric space $(\Omega, d_\eps)$ 
Gromov-Hausdorff converges to $(\Omega, d_{\text{GH}})$.
By \cite[Theorem 28.6]{villani2009optimal}, 
the Wasserstein distance $W_\eps$ defined in \eqref{W_eps_5} also
converges to the following limiting Wasserstein distance $W_{\text{GH}}$ 
in the Gromov-Hausdorff sense, 
 \begin{equation}\label{W_GH}
 W_{\text{GH}}^2(\rho_0, \rho_1):= \inf\left\{
\int \int d^2_{\text{GH}}(x,y) \ud \gamma(x,y); \quad \int_{\Omega} \gamma( x,\ud y)   = \rho_0(x) \ud x, \,\, \int_{\Omega} \gamma(\ud x, y)   = \rho_1(y) \ud y
\right\}.
 \end{equation}
 Again by \cite[Theorem AB]{bernard2007optimal}, $W_{\text{GH}}$ can be equivalently written in the Benamou-Brenier formulation
 \begin{equation}
W_{\text{GH}}^2(\rho_0, \rho_1):= \inf\left\{
\int_0^1\int \rho_t(x)\langle \lbar{C}  v_t(x), v_t(x)\rangle \ud x\ud t,\quad
(\rho_t, v_t)\in V(\rho_0, \rho_1)
\right\} 
\end{equation} 
with $\displaystyle \lbar{C}=\left(\int_0^1 \sqrt{B(s)} \ud s\right)^2.$  

On the other hand, in one dimension, we can solve the cell problem 
\eqref{cell1} explicitly:
\begin{eqnarray*}
\partial_y\big(D(x,y)\partial_y w(x,y)\big) 
&=& -\partial_y\left(D(x,y)\right),
\quad \text{where}\,\,\,D(x,y) = \pi(x,y)B(y)^{-1},\\
\partial_y w(x,y) 
&=& -1 + \frac{C(x)}{D(x,y)} 
\quad \text{with}\,\,\,C(x) = \left(\int \frac{1}{D(x,y)}\ud y\right)^{-1}.
\end{eqnarray*}
Then \eqref{homog.coeff} and \eqref{homog.B} are given as
\begin{eqnarray*}
\lbar{D}(x) &=& \int D(x,y)\ud y,\\
\lbar{G}(x) 
&=& \int D(x,y) \left(-1 + \frac{C(x)}{D(x,y)}\right)\ud y
= -\int D(x,y)\ud y  + 
\left(\int \frac{1}{D(x,y)}\ud y\right)^{-1},\\
\lbar{B}
&=&
\left(\frac{\lbar{D} + \lbar{G}}{\lbar{\pi}}\right)^{-1}
= \lbar{\pi}\int\frac{1}{D(x,y)}\ud y
= \lbar{\pi}\int\frac{B(y)}{\pi(x,y)}\ud y.
\end{eqnarray*}

By the Cauchy-Schwarz inequality,  we always have
\begin{align*}
\lbar{C}
=\left(\int_0^1 \sqrt{B(s)} \ud s\right)^2 
=&\left(\int_0^1 \sqrt{\pi(x,y)}\sqrt{\frac{B(y)}{\pi(x,y)}} \ud y\right)^2 
\\
\leq&  
\left(\int \pi(x,y)\ud y\right)
\left(\int \frac{B(y)}{\pi(x,y)}\ud y\right)
= \lbar{B}(x),
\end{align*}
and the equality holds if and only if there exists some constant $c>0$ such that
\begin{eqnarray}\label{==}
\sqrt{\pi(x,y)} = c\sqrt{\frac{B(y)}{\pi(x,y)}},
\quad\text{i.e.}\quad
    \pi(x,y)=\pi(y) = c\sqrt{B(y)}.
\end{eqnarray}
Hence, unless $\pi(y)=c\sqrt{B(y)}$, we always have
\[
d_{\text{GH}}(x,y) < \lbar{d}(x,y)\quad\text{for all $x,y\in\Omega$}
\]
i.e. $W_{\text{GH}} < \lbar{W}.$ As an afterthought, it seems not quite surprising  that some condition, such as \eqref{==}, is needed in order for $\lbar{W}$ to be equal to $W_{\text{GH}}$. We will elaborate upon this at the end of this section.

Next, we illustrate the $n$-dimensional case by means of an example.
From \cite[Section 3.3]{braides2002gamma}, it is shown that the functional
\begin{equation}
{\mathcal F}_\eps(z) = 
\int_0^1 \la B_\eps(z_t) \dot{z}_t, \dot{z}_t \ra \ud t, 
\quad \text{for} \quad z(\cdot)\in (H^1([0,1]))^n  
\quad \text{with}\quad z_0 = x, \quad z_1=y,
\end{equation}
$\Gamma$-converges with respect to the strong $L^2(0,1)$-topology to
\begin{equation}
{\mathcal F}(z) = \int_0^1 \varphi(\dot{z}(t)) \ud t
\quad \text{for}\quad z(\cdot)\in (H^1([0,1]))^n, 
\quad \text{with}\quad z_0 = x, \quad z_1=y,
\end{equation}
where the limiting integrand $\varphi$ is given by
\begin{equation}\label{varphi.limit.rep}
\varphi(v) := \lim_{T\to +\8} \inf_{u\in (H^1_0([0,T]))^n}
\left\{\frac{1}{T} \int_0^T \la B(u(t)+vt) (\dot{u}(t)+v), \, \dot{u}(t)+v \ra \ud t\right\}.
\end{equation}

Now following \cite[Example 3.3]{braides2002gamma}, we consider
$B_\eps(z)=b(\frac{z}{\eps})$ where $b$ is the following $1$-periodic
function on $[0,1]^n$,
\begin{equation*}
b(y) =
\left\{
\begin{array}{ll}
\beta & \text{ if } y\in (0,1)^n;\\
\alpha & \text{ if for some }i,\,  y_i\in \bZ.
\end{array}
\right.
\end{equation*}
If $n \alpha < \beta$, one obtains that the limiting energy integrand
$\varphi$ is given by 
\begin{equation}\label{varphi.finsler}
\varphi(v)=\alpha\left(\sum_{i=1}^n |v_i|\right)^2.
\end{equation}
Using the property of $\Gamma$-convergence 
\cite[Theorem 1.21]{braides2002gamma}, we deduce also the convergence of 
the minimum value $d^2_\eps$ of $\mathcal{F}_\eps$ to the minimum value
$d^2_{\text{GH}}$ of $\mathcal{F}$, where 
\begin{equation}\label{thdd}
d_{\text{GH}}(x,y)=  \sqrt{\alpha}\left(\sum_{i=1}^n |\hat{n}_i|\right)|y-x| = \sqrt{\alpha}\|y-x\|_{\ell^1}  
\,\,\,  \text{with $\hat{n}=\frac{y-x}{|y-x|}.$}
\end{equation}

On the other hand, note that the value $\alpha$ is attained only on 
the $(n-1)$-dimensional set $\bigcup_{i=1}^n\{y_i \in \mathbb Z\}$.
This set is {\em invisible} by $\lbar{B}$ which is obtained by solving
the elliptic cell problem \eqref{cell1}. 
Hence the induced limiting Wasserstein distance $\lbar{W}$ \eqref{W_*} 
with $\lbar{d}$ defined in \eqref{d_*} is 
$\lbar{d}(x,y)=\beta|x-y|$ for all $x,y\in\Omega$.  
Thus, for this example, we have
\[
d_{\text{GH}}(x,y)=  \sqrt{\alpha} \|y-x\|_{\ell^1} 
\leq \sqrt{\alpha n}\|y-x\|_{\ell^2} 
< \sqrt{\beta} |y-x| = \lbar{d}(x,y).
\]
Hence we have again $W_{\text{GH}} < \lbar{W}$.

We would like to point out that for the above example, the integrand
$\varphi$ in \eqref{varphi.limit.rep} is always quadratic, or homogeneous of 
degree 2 in $p$. (In fact, for any 
$\lambda \neq 0$, by applying the change of variables 
$\tilde{t}=\lambda t, \tilde{u}(\tilde{t})=u(t)$, it is easy to verify that
$\varphi(\lambda v) = \lambda^2 \varphi(v)$.) However, the
$\varphi$ in \eqref{varphi.finsler} is {\em not bilinear} in $p$, in contrast to the $\varphi$ in 
\eqref{d_*}:
\[
\varphi(p) = \la \bar{B} p, p \ra.
\]

Below we give further remarks about the discrepancy between $\lbar{W}$ and $W_{\text{GH}}$.
\begin{enumerate}
\item
We first explain the condition \eqref{==}. This is nothing but the fact that one can choose the Riemannian metric $(\bR, g_\eps)$ with $(g_\eps)_{ij}(x)= B_\eps(x),$ so that the Wasserstein distance on $(\bR, g_\eps)$ coincides with $W_\eps$. 
More precisely, the condition \eqref{==} implies the volume form on $(\bR, g_\eps)$ is
\begin{equation}
    \ud\text{Vol} = \sqrt{|g_\eps|}\ud x = \sqrt{B_\eps}\ud x = c\pi_\eps(x)\ud x=c\pi(\frac{x}\eps)\ud x.
\end{equation}
Therefore, the heat flow on $(\bR, g_\eps)$, in terms of the density function with respect to the volume element $\ud\text{Vol}$ is given by
\begin{equation}
    \pt_t p_\eps = \frac{1}{\sqrt{|g_\eps|}}\nabla\cdot(\sqrt{|g_\eps|}g^{ij}_\eps\nabla p_\eps)= \frac{1}{\pi_\eps} \nabla \cdot (\pi_\eps \be \nabla p_\eps).
\end{equation}
This equation, in terms of the density function $\rho_\eps(x,t) = p_\eps(x,t) \sqrt{|g_\eps|} = p_\eps(x,t) \pi_\eps(x)$, is exactly the $W_\eps$-gradient flow with respect to the relative entropy $E_\eps$ in \eqref{freeEeps}:
\begin{eqnarray}
    \pt_t \rho_\eps = \nabla \cdot (\pi_\eps \be \nabla \frac{\rho_\eps}{\pi_\eps})=\nabla \cdot \bbs{ \rho^\eps_t B_\eps^{-1} \nabla \frac{\delta E_\eps}{\delta \rho}(\rho^\eps_t) }.
\end{eqnarray}
Therefore, condition \eqref{==} means that the discrepancy between $\overline{W}$ and $W_{\text{GH}}$ does not happen in one dimension when one considers homogenization of heat flow on $(\bR, g_\eps)$. In other words, the homogenized  heat flow in one dimension  naturally induces the same limiting distance as  finding the limiting minimum path on $(\bR, g_\eps)$. On the other hand, even in one dimension, the convergence of the discrete transport distance to continuous transport distance $W_2$ requires an isotropic mesh condition \cite[eq. (1.3)]{gladbach2020homogenisation}. Without this condition, the discrete-to-continuous limiting  distance in the Gromov-Hausdorff sense can be different from the continuous transport distance $W_2$ \cite[Theorem 1.1, Remarks 1.2 and 1.3]{gladbach2020homogenisation}.

\item
We believe that the above conclusion of $W_{\text{GH}} < \lbar{W}$ is true in general, particularly in higher dimensions, even if we consider heat flow. This is because
the Gromov-Hausdorff limit $d_{\text{GH}}$ of $d_\eps$ involves finding the 
minimum or geodesic distance between two points as indicated in \eqref{eps-d}. 
This amounts to searching for the {\em minimum path} in the 
underlying spatial inhomogeneity.
On the other hand, the $\lbar{B}$ in the limiting
induced distance $\lbar{d}$ is found by solving an elliptic cell-problem 
\eqref{cell1} which 
requires taking some {\em average} of the spatial inhomogeneity.
(Note that in contrast, in one dimension, any path will explore the whole inhomogeneous landscape.)
Hence, in general $d_{\text{GH}}$ and $W_{\text{GH}}$ should be smaller 
than $\lbar{d}$ and $\lbar{W}$.
See also the discussion in 
\cite[p.~4298]{forkert2022evolutionary} and the work 
\cite{gladbach2020homogenisation}.
\end{enumerate}

\section{Conclusion}
This paper provides a variational framework using the energy dissipation inequality
(EDI) to prove the convergence of gradient flows in Wasserstein spaces.
Our key contribution is the incorporation of fast oscillations in the 
underlying energy and medium. In particular, the gradient-flow structure is 
preserved in the limit but is described
with respect to an effective energy and metric. Our result is
consistent with asymptotic analysis from the realm of homogenization. Even
though we apply the result to a linear Fokker-Planck equation in a
continuous setting, we believe the approach is applicable to a broader class 
of problems including nonlinear equations or evolutions on graphs
and networks.

\subsection*{Competing Interests}
The authors declare none.

\subsection*{Acknowledgements}
Yuan Gao was partially supported by NSF DMS-2204288 and  NSF CAREER DMS-2440651. 
We would also like to thank the anonymous referees for their constructive comments.

\appendix

\section{Asymptotic analysis for the $\eps$-gradient flow}\label{asym.exp}
In this section, we use the method of asymptotic expansion to analyze
the convergence of the $\eps$-Fokker-Planck equation \eqref{epsFP} 
(or \eqref{epsGF}) to the limiting homogenized one \eqref{0GF}.

Recall the assumptions \eqref{Bform} and \eqref{piform}
for $B_\eps$ and $\pie$ in Section \ref{main} and the definition of
fast variable $\displaystyle y:= \frac{x}{\eps}$. 
Introducing 
\begin{equation}\label{D.def}
D(x,y)=\pi(x,y)B^{-1}(y),
\end{equation}
then \eqref{backward} reads
\begin{equation}\label{tmf}
\pt_t f^\eps =\frac{1}{\pie} \nabla \cdot \bbs{   D(x,\frac{x}{\eps}) \nabla f^\eps}.
\end{equation}

Consider  the ansatz
\begin{equation}
f^\eps\big(x,t\big) = f_0\big(x,\frac{x}{\eps},t\big) + \eps f_1\big(x,\frac{x}{\eps},t\big) + O(\eps^2) \quad
\text{with $f_0$ and $f_1$ $1$-periodic in $y$.}
\end{equation}
Substituting it into \eqref{tmf}, we have
\begin{align}\label{feps}
\pt_t 
\big(f_0+\eps f_1 + O(\eps^2)\big)
= \frac{1}{\pi(x,y)}\left(\nabla_x + \frac{1}{\eps} \nabla_y\right)\cdot \bbs{D(x,y)\left(\nabla_x+\frac{1}{\eps}\nabla y\right)
\big(f_0+\eps f_1 + O(\eps^2)\big)}.
\end{align} 
Terms of different orders are analyzed as follows.

\begin{description}
\item[(I) $\frac{1}{\eps^2}$-terms] They satisfy,
\begin{align*}
\nabla_y \cdot \bbs{D(x,y) \nabla_y f_0(x,y,t)} =0.
\end{align*}
Multiply the above by $f_0(x,y,t)$ and then integrate over $y$ gives
$\displaystyle \int |\nabla_y f_0(x,y,t)|^2\ud y = 0$ 
which implies $f_0(x,y,t)=f_0(x,t).$

\item[(II) $\frac{1}{\eps}$-terms] They satisfy,
\begin{equation}
\nabla_y \cdot \bbs{D(x,y) (\nabla_x f_0 + \nabla_y f_1)}=0.
\end{equation} 
For $i=1,2,\ldots d$, let $w_i(y)$ be the solution to the cell problem
\begin{equation}\label{cell1}
\nabla_y \cdot \bbs{D(x,y)\nabla_y w_i(x,y)} +  \nabla_y \cdot \bbs{D(x,y) \vec{e}_i}=0,
\end{equation}
where $\vec{e}_i$ is the unit vector in $i$-coordinate. 
The above equation is solvable for each $i$ due to the compatibility condition
$\displaystyle \int \nabla_y \cdot \bbs{D(x,y) \vec{e}_i} \ud y = 0$.
Then we can write $f_1$ as
$$
f_1(x,y,t)=\sum_i \pt_{x_i} f_0(x,t) w_i(x,y).
$$

\item[(III) $O(1)$-terms] Collecting the $O(1)$-terms in \eqref{feps} and
integrating with respect to $y$ lead to
 \begin{align*}
 \pt_t f_0(x,t)\,  \bar{\pi}(x) 
= \nabla_x \cdot (\overline{D}(x) \nabla_x f_0(x,t)) + \nabla \cdot 
\bbs{\sum_i \pt_{x_i} f_0(x,t) \overline{G}_i(x) },
\end{align*}
where
\begin{equation}\label{homog.coeff}
\overline{D}(x):=\int \pi(x,y) B^{-1}(y) \ud y, \quad  
\lbar{G}_i(x):= \int \pi(x,y) B^{-1}(y) \nabla_y w_i(x,y) \ud y, 
\end{equation}
and $\displaystyle \lbar{\pi} = \int\pi(x,y)\ud y$; see \eqref{pi.ave}.
\end{description}

Then the leading dynamics in terms of $f_0$ is given by
\begin{equation}\label{homog.f}
\pt_t f_0 = \frac{1}{\overline{\pi}} 
\nabla \cdot \bbs{(\overline{D}+\lbar{G}) \nabla f_0},
\quad\text{where}\,\,\,\lbar{G} = (G_1, G_2,\ldots G_n).
\end{equation} 
Upon defining
\begin{equation}\label{homog.B}
\lbar{B}(x) = \left(\frac{\lbar{D}+\lbar{G}}{\lbar{\pi}}\right)^{-1},
\end{equation}
in terms of $\rho=f_0 \lbar{\pi}$, \eqref{homog.f} can be written as
 \begin{equation}\label{homog.rho}
 \pt_t \rho = \nabla \cdot \bbs{\rho\, \overline{B}^{-1} \nabla \log 
\frac{\rho}{\lbar{\pi}}}.
\end{equation}  
 
The above procedure certainly works for the simpler uniform convergence
case $\pie=\pie^{\text{II}}$ in \eqref{pie} which converges uniformly to 
$\pi_0$. We find it illustrative to write down the homogenized limit equation.
In this case, the definition of $D$ \eqref{D.def}, the cell problem 
\eqref{cell1} and the effective coefficients \eqref{homog.coeff} now become
\[
D(x,y) = \pi_0(x)B^{-1}(y),\quad
\nabla_y \cdot \bbs{B^{-1}(y)\nabla_y w_i(y)} 
+ \nabla_y \cdot \bbs{B^{-1}(y) \vec{e}_i}=0,
\]
and
\[
\overline{D}(x):=\pi_0(x)\int  B^{-1}(y) \ud y, \quad  
\lbar{G}(x):= \pi_0(x) \int  B^{-1}(y) \nabla_y w(y) \ud y,
\quad(\text{where}\,\,\,w=(w_1,w_2\ldots w_n)),
\]
so that
\begin{equation}\label{homog.B2}
\lbar{B}(x) 
= \left(\frac{\lbar{D}(x) + \lbar{G}(x)}{\pi_0(x)}\right)^{-1}
= \left(\int  B^{-1}(y) \ud y + \int  B^{-1}(y) \nabla_y w(y) \ud y\right)^{-1}.
\end{equation}
Then the effective Fokker-Planck equation is given by
\begin{equation}
\begin{aligned}
\pt_t \rho 
= \nabla \cdot \bbs{ \rho\, \overline{B}^{-1} \nabla \log \frac{\rho}{\pi_0}}.
\end{aligned}
\end{equation}
Comparing \eqref{homog.B} and \eqref{homog.B2}, it is clear 
that there is interaction between $B_\eps$ and $\pie$ in the former case 
but not in the latter. 

\section{Construction of $\tilde{\xi}^\eps$ for \eqref{approx.seq.psi}}\label{GammaInhomog}

Here we construct an approximating sequence $\tilde\xi^\eps\wra\tilde\xi$ in $H^1(\Omega)$ such that \eqref{approx.seq.psi} holds. As mentioned, due to the spatially varying weight function
$f^\eps$, in order to decouple the dependence between $D_\eps$ and $f^\eps$, an extra step is needed if we want to invoke the classical $\Gamma$-convergence result Theorem \ref{GammaMainThm}.
Without loss of generality, we assume that $\tilde{\xi}$ is smooth so that pointwise evaluation $\tilde{\xi}(x)$ is well-defined. This can be achieved by first convolving
$\tilde{\xi}$ with a smooth kernel. We also recall by statement (1) of Lemma \ref{lem_reg} that $f$ is a bounded and uniformly positive function.

For this purpose, we write for any $\tilde\xi^\eps$ that
\begin{eqnarray*}
&&\frac12\int_\Omega\la\nabla\tilde\xi^\eps, D_\eps\nabla\tilde\xi^\eps \ra f^\eps\ud x\\
& = & 
\frac12\int_\Omega\la\nabla\tilde\xi^\eps, D_\eps\nabla\tilde\xi^\eps \ra f_c\ud x
+ \frac12\int_\Omega\la\nabla\tilde\xi^\eps, D_\eps\nabla\tilde\xi^\eps \ra (f-f_c)\ud x
+\frac12\int_\Omega\la\nabla\tilde\xi^\eps, D_\eps\nabla\tilde\xi^\eps \ra (f^\eps-f)\ud x,
\end{eqnarray*}
where $f_c$ is some continuous function approximating $f$. 
Next, we partition $\Omega$ into finitely many cubes $C_j$ and define the following
piece-wise constant function
\[
\bar{f}_c(x) = \bar{f}_{c_j} := \frac{1}{|C_j|}\int_{C_j}f_c \ud x \quad
\text{for $x\in C_j$}.
\]
Hence
\[
\frac12\int_\Omega\la\nabla\tilde\xi^\eps, D_\eps\nabla\tilde\xi^\eps \ra f_c\ud x
= 
\sum_j\frac12\int_\Omega\la\nabla\tilde\xi^\eps, D_\eps\nabla\tilde\xi^\eps \ra \bar{f_c}_j\ud x 
+
\sum_j\frac12\int_\Omega\la\nabla\tilde\xi^\eps, D_\eps\nabla\tilde\xi^\eps \ra 
(f_c-\bar{f_c}_j)\ud x.
\]
With the above, we have
\begin{eqnarray*}
&&\lim_{\eps\to0}\frac12\int_\Omega\la\nabla\tilde\xi^\eps, D_\eps\nabla\tilde\xi^\eps \ra f^\eps\ud x\\
& = &
\lim_{\eps\to0}
\frac12\int_\Omega\la\nabla\tilde\xi^\eps, D_\eps\nabla\tilde\xi^\eps \ra \bar{f}_c\ud x 
+\lim_{\eps\to0}
\frac12\int_\Omega\la\nabla\tilde\xi^\eps, D_\eps\nabla\tilde\xi^\eps \ra (f_c-\bar{f}_c)\ud x\\
&&
+ \lim_{\eps\to0}\frac12\int_\Omega\la\nabla\tilde\xi^\eps, D_\eps\nabla\tilde\xi^\eps \ra (f-f_c)\ud x
+\lim_{\eps\to0}\frac12\int_\Omega\la\nabla\tilde\xi^\eps, D_\eps\nabla\tilde\xi^\eps \ra (f^\eps-f)\ud x.
\end{eqnarray*}

Now on each $C_j$, we can invoke Theorem \ref{GammaMainThm} to state the existence of recovery sequence
$\tilde\xi^\eps_j\wra\tilde\xi$ in $H^1_0(C_j) + {g_c}_j$, where
${g_c}_j = \tilde\xi\Big|_{\partial C_j}$ such that
\begin{equation}\label{GammaConstMult1}
\lim_{\eps\to0}
\frac12\int_{C_j}\la\nabla\tilde\xi^\eps_j, D_\eps\nabla\tilde\xi^\eps_j \ra \bar{f_c}_j\ud x
=\frac12\int_{C_j}\la\nabla\tilde\xi,(\lbar{D}+\lbar{G})\nabla\tilde\xi \ra\bar{f_c}_j\ud x.
\end{equation}
Now let $\tilde\xi^\eps = \tilde\xi^\eps_j$ on $C_j$. Note that $\tilde\xi^\eps$ thus
defined is a global $H^1$-function on $\Omega$. As there are only finitely many cubes $C_j$, we can conclude that
\begin{equation}\label{GammaConstMult2}
\lim_{\eps\to0}
\frac12\int_{\Omega}\la\nabla\tilde\xi^\eps, D_\eps\nabla\tilde\xi^\eps \ra \bar{f_c}\ud x
=\frac12\int_{\Omega}\la\nabla\tilde\xi,(\lbar{D}+\lbar{G})\nabla\tilde\xi \ra \bar{f_c}\ud x.
\end{equation}
Hence we have
\begin{eqnarray}
&&\lim_{\eps\to0}\frac12\int_\Omega\la\nabla\tilde\xi^\eps, D_\eps\nabla\tilde\xi^\eps \ra f^\eps\ud x\nonumber\\
& = &
\frac12\int_{\Omega}\la\nabla\tilde\xi,(\lbar{D}+\lbar{G})\nabla\tilde\xi \ra f\ud x
\nonumber\\
&&
+\frac12\int_{\Omega}\la\nabla\tilde\xi,(\lbar{D}+\lbar{G})\nabla\tilde\xi \ra (\bar{f_c}-f)\ud x+\lim_{\eps\to0}
\frac12\int_\Omega\la\nabla\tilde\xi^\eps, D_\eps\nabla\tilde\xi^\eps \ra (f_c-\bar{f}_c)\ud x\label{lusin}\\
&&+ \lim_{\eps\to0}\frac12\int_\Omega\la\nabla\tilde\xi^\eps, D_\eps\nabla\tilde\xi^\eps \ra (f-f_c)\ud x
+\lim_{\eps\to0}\frac12\int_\Omega\la\nabla\tilde\xi^\eps, D_\eps\nabla\tilde\xi^\eps \ra (f^\eps-f)\ud x.\label{egorov}
\end{eqnarray}

A final ingredient we need is that the sequence of functions $\la\nabla\tilde\xi^\eps, D_\eps\nabla\tilde\xi^\eps \ra$ is \emph{equi-integrable}:
\emph{for all $\sigma>0$, there exists a $\delta>0$ such that for any $S\subset\Omega$ with $|S| \leq \delta$, then}
\begin{equation}\label{equi.int}
\int_S \la\nabla\tilde\xi^\eps, D_\eps\nabla\tilde\xi^\eps \ra \leq \sigma
\,\,\,\text{\emph{holds for all $\eps> 0$.}}
\end{equation}
Once this is shown, we can then make use of Lusin and Egorov Theorems to claim that all the terms in \eqref{lusin} and \eqref{egorov} converge to zero as $\eps\to0$:
up to arbitrarily small measures, $f$ equals a continuous function $f_c$, and the convergence of $f^\eps$ to $f$ is uniform.
We recall again that $f^\eps$ and $f$ are uniformly bounded functions. 

We now show that the sequence of functions $\tilde{\xi}^\eps$ can be constructed so as it satisfies \eqref{equi.int}. Without loss of generality, we replace $\tilde{\xi}$ by a continuous and piece-wise affine function -- this can be achieved by an approximation using Galerkin or finite element method (given that $\tilde{\xi}$ is smooth). Then we have a partition of $\Omega$ into a collection of polyhedrons. For simplicity, we can further assume that these polyhedrons are the same
as the $C_j$ on each of which $\bar{f}_c$ is constant.
Now we construct $\tilde\xi^\eps$ according to the following procedure.

First, we define $A(x,y)=D(x,y)=\pi(x,y)B^{-1}(y)$. 
By the smooth assumption of $\pi$ and $B$, we have that $A$ is smooth in $y\in\mathbb T^n$ and $x\in C_j$.

Now, for $x\in C_j$, as $\nabla\tilde{\xi}$ is a constant vector $p_j\in\mathbb{R}^n$, the homogenized matrix $\lbar{A}(x)$ in Theorem \ref{GammaMainThm} is given by \eqref{eff.mat} and is repeated here for convenience.
\[
\big\la \lbar{A}(x) p_j, p_j\big\ra 
= \inf\left\{\int_{\mathbb T^n} 
\left\la A\left(x,y\right)(p_j + \nabla v),\, (p_j + \nabla v)\right\ra\ud y,
\quad v\in H^1(\mathbb T^n)\right\}.
\]
The $\inf$ above is achieved by $v_j(y)=|p_j|\hat{w}_j(x,y)$ where
$\hat{w}_j$ solves the following cell-problem:
\[
\text{div}_y\left(A(x,y)\nabla \hat{w}_j\right)=-\text{div}_y\left(A(x,y)\frac{p_j}{|p_j|}\right),\,\,\,\hat{w}_j(x,\cdot)\in H^1(\mathbb T^n),\,\,\,\int_{\mathbb T^n}\hat{w}_j(x,y)\,\ud y = 0.
\]
The smoothness assumption on $A$ implies that
\[
\|\hat{w}_j(x,\cdot),\,\,\,\nabla_y \hat{w}_j(x,\cdot),\,\,\,
\nabla_x \hat{w}_j(x,\cdot)\|_{L^\infty(\mathbb T^2)}
\leq C
\]
for some constant $C$ that does not depend on $x$ and $\eps$.

Next, let $0 < d_1 < d_2$ be two positive numbers. For each $C_j$, there exists a smooth subdomain $C_j'$ of $C_j$ such that
$d_1\eps \leq \text{dist}(\partial C_j', \partial C_j) \leq d_2\eps$.
Then we define a cut-off function $\eta^\eps_j$ on $C_j$ satisfying: 
(i) $0\leq \eta^\eps_j \leq 1$ on $C_j$;
(ii) $\eta^\eps_j = 1$ on $C_j'$; and 
(iii) $\eta^\eps_j(x)\longrightarrow0$ smoothly as $x\longrightarrow\partial C_j$ so that
$\eta^\eps_j \in C^\infty_0(C_j)$;
(iv) $\|\eps\nabla\eta^\eps_j\|_{L^\infty(C_j)} \leq C$ for an $\eps$-independent constant $C$.

With the above, suppose
$\tilde{\xi}(x) 
= \sum_j\big[\alpha_j + \la p_j,x\ra\big] 
\chi_{C_j}(x)$,
where $\chi_{C_j}$ is the characteristic function of $C_j$.
We then define
\[
\tilde{\xi}^\eps(x) = \sum_j\left[\alpha_j + \la p_j,x\ra + \eps\eta^\eps_j(x)|p_j|\hat{w}_j(x,\frac{x}{\eps})
\right]\chi_{C_j}(x)
\]
Then we have,
\[
\nabla\tilde{\xi}^\eps(x) =
\sum_j \left[p_j + 
\eta^\eps_j(x)|p_j|\nabla_y\hat{w}_j(x,\frac{x}{\eps})
+
\eps\eta^\eps_j(x)|p_j|\nabla_x\hat{w}_j(x,\frac{x}{\eps})
+
\eps\nabla\eta^\eps_j(x)|p_j|\hat{w}_j(x,\frac{x}{\eps})\right]\chi_{C_j}(x).
\]
By the aforementioned estimates for $\hat{w}_j$ and $\eta^\eps_j$, we can conclude that
$|\nabla \tilde{\xi}^\eps(x)| \leq C|p_j|$ for $x\in C_j$ and hence
\[
|\nabla \tilde{\xi}^\eps(x)| \leq C|\nabla\tilde{\xi}(x)|
\,\,\,\text{for all $x\in\Omega$.}
\]
(Here we make use of the $L^\infty(\mathbb T^n)$ estimates for $\hat{w}_j$ but we could also resort to the weaker $L^2(\mathbb T^n)$ estimates.) Note that the above statement holds uniformly for all $\eps\ll 1$. 
We can then conclude \eqref{equi.int} as
$\displaystyle \int_{\Omega}|\nabla\tilde{\xi}|^2 \ud x$ is finite.

The fact that $\left\{\tilde{\xi}^\eps\right\}_{\eps>0}$ is a recovery sequence for $\tilde{\xi}$ 
is due to the properties that 
$\tilde{\xi}^\eps\longrightarrow\tilde{\xi}$ in $L^2(\Omega)$ and 
$\nabla\tilde{\xi}^\eps$ differs from the ``optimal'' oscillatory functions
$\left\{p_j + |p_j|\nabla_y\hat{w}_j(x,\frac{x}{\eps})\right\}_j$ only on $\bigcup_j C_j\backslash C_j'$ which has vanishing measure as $\eps\longrightarrow0$. More precisely, we have
\begin{eqnarray*}
&&\lim_{\eps\to0}\int\big\langle A(x,\frac{x}{\eps})\nabla\tilde{\xi}^\eps,\nabla\tilde{\xi}^\eps\big\rangle\bar{f}_c\ud x
=
\lim_{\eps\to0}\sum_j\int_{C_j}\big\langle A(x,\frac{x}{\eps})\nabla\tilde{\xi}^\eps,\nabla\tilde{\xi}^\eps\big\rangle\bar{f}_{c_j}\ud x\\
&=&\sum_j\int_{C_j}\int_{\mathbb T^n}\Big\langle A(x,y)\big(p_j + |p_j|\nabla_y\hat{w}_j(x,y)\big),
\big(p_j + |p_j|\nabla_y\hat{w}_j(x,y)\big)\Big\rangle\ud y \,\bar{f}_{c_j}\ud x\\
&=&
\sum_j\int_{C_j}\langle \bar{A}(x)p_j,p_j\rangle\bar{f}_{c_j}\ud x
=
\int\big\langle \bar{A}(x)\nabla\tilde{\xi},\nabla\tilde{\xi}\big\rangle\bar{f}_c\ud x.
\end{eqnarray*}
The above computation is classical in the theory of two-scale convergence -- see \cite[Prop. 1.14(i), and equations (2.10), (2.11)]{allaire1992homogenization}. 
Note also that \eqref{GammaConstMult1} and \eqref{GammaConstMult2} hold
as $\bar{f}_c$ is constant on the $C_j$'s.

We can now conclude \eqref{approx.seq.psi}.

\bibliographystyle{alpha}
\bibliography{homo_bib}

\end{document}